\documentclass[12pt]{amsart}
\usepackage[all]{xy}
\usepackage[parfill]{parskip}

\usepackage{color}

\usepackage{amsmath, amscd, graphicx, latexsym, hyperref, times}
\usepackage{color,graphicx}
\usepackage[abs]{overpic}
\usepackage{tikz}
\usepackage{tikz-cd}
\usetikzlibrary{arrows, patterns}

\textwidth 6in \textheight 7.6in \evensidemargin .25in

\oddsidemargin .25in \theoremstyle{plain}
\newtheorem{dummy}{anything}[section]
\newtheorem{theorem}[dummy]{Theorem}

\newtheorem{lemma}[dummy]{Lemma}

\newtheorem{proposition}[dummy]{Proposition}

\newtheorem{corollary}[dummy]{Corollary}

\theoremstyle{definition}
\newtheorem{definition}[dummy]{Definition}



\newcommand{\Z}{\mathbb{Z}}

\newcommand{\HF}{\widehat{HF}}
\newcommand{\HFK}{\widehat{HFK}}
\newcommand{\CFK}{\widehat{CFK}}

\begin{document}

\title{A bound for rational Thurston-Bennequin invariants}

\author{Youlin Li and Zhongtao Wu}

\address{School of Mathematical Sciences, Shanghai Jiao Tong University, Shanghai 200240, China}

\email{liyoulin@sjtu.edu.cn}
\address{Department of Mathematics, The Chinese University of Hong Kong, Shatin, Hong Kong}
\email{ztwu@math.cuhk.edu.hk}

\subjclass[2000]{}



\begin{abstract}
In this paper, we introduce a rational $\tau$ invariant for rationally null-homologous knots in  contact 3-manifolds with nontrivial Ozsv\'{a}th-Szab\'{o} contact invariants. Such an invariant is an upper bound for the sum of rational Thurston-Bennequin invariant and the rational rotation number of the Legendrian representatives of the knot.  In the special case of Floer simple knots in L-spaces, we can compute the  rational $\tau$ invariants by correction terms.
\end{abstract}

\maketitle

\section{Introduction}\label{sec: intro}

Given a Legendrian  representative $L$ of an integrally null-homologous knot $K$ in a tight contact  3-manifold $(Y, \xi)$.  We have the well-known Bennequin-Eliashberg inequality \cite{b} \cite{e} $$tb(L)+rot(L)\leq 2g(K)-1,$$ where $g(K)$ is the genus of $K$.   Plameneveskaya \cite{p} improved this inequality for knots in the tight contact 3-sphere $(S^{3}, \xi_{std})$, and showed that  $$tb(L)+rot(L)\leq 2\tau(K)-1,$$  where $\tau(K)$ is an invariant of $K$ defined by Ozsv\'{a}th and Szab\'{o} \cite{OSzFourBallGenus}.  Later on, Hedden \cite{h} introduced an invariant $\tau_{\xi}(K, F)$ for an integrally null-homologous knot $K$ with a Seifert surface $F$ in a contact 3-manifold $(Y, \xi)$ with a non-trivial Ozsv\'{a}th-Szab\'{o} contact invariant  $c(\xi)$ \cite{OSzContact}.  He proved that for any Legendrian representatives $L$ of $K$ in $(Y,\xi)$, $$tb(L)+rot(L; F)\leq 2\tau_{\xi}(K, F)-1.$$

More generally, consider a rationally null-homologous knot $K$ in a 3-manifold $Y$. Let $L$ be a Legendrian representative of a rationally null-homologous knot $K$ in a contact 3-manifold $(Y,\xi)$, and let $F$ be a rational Seifert surface of $K$.  Baker and Etnyre \cite{be} defined the rational Thurston-Bennequin invariant $tb_{\mathbb{Q}}(L)$ and rational rotation number $rot_{\mathbb{Q}}(L; F)$.
When $\xi$ is a tight contact structure on $Y$, they showed that
\begin{equation}\label{BEineq}
tb_{\mathbb{Q}}(L)+rot_{\mathbb{Q}}(L; F)\leq -\frac{1}{q}\chi(F),
\end{equation}
where $q$ is the order of $[K]$ in $H_{1}(Y;\mathbb{Z})$.


In this paper, we introduce an invariant $\tau^{\ast}_{c(\xi)}(Y, K, F)$ for an rationally null-homologous knot $K$, which generalizes Hedden's definition \cite{h}. 
Our main theorem proves that this invariant gives an upper bound for the sum of the rational Thurston-Bennequin invariant and the rational rotation number of all Legendrian representatives of $K$.

\begin{theorem} \label{thm: main} Suppose $K$ is a rationally null-homologous knot in a 3-manifold $Y$ with a rational Seifert surface $F$, and $\xi$ is a contact structure on $Y$ with nontrivial Ozsv\'{a}th-Szab\'{o} contact invariant $c(\xi)\in \widehat{HF}(-Y, \mathfrak{s}_{\xi})$. Then for any Legendrian representative $L$ of $K$, we have \begin{equation}\label{plus-1}
tb_{\mathbb{Q}}(L)+rot_{\mathbb{Q}}(L; F)\leq 2\tau^{\ast}_{c(\xi)}(Y, K, F)-1.
\end{equation}
\end{theorem}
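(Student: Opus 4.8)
The plan is to construct a Heegaard Floer invariant of the Legendrian knot $L$ that refines the contact class $c(\xi)$, and then to extract the inequality from the position of this invariant in the knot filtration. First I would fix an open book decomposition of $(Y,\xi)$ compatible with $\xi$ in which $L$ sits on a page. Following the Legendrian invariant construction of Lisca--Ozsv\'{a}th--Stipsicz--Szab\'{o}, extended to the rationally null-homologous setting, this produces a cycle whose homology class $\widehat{\mathcal{L}}(L)$ lives in $\HFK(-Y,K)$ in the relative Spin$^c$ structure determined by $\mathfrak{s}_\xi$ and $F$. The first key point, established by the same mechanism as in the integrally null-homologous case, is that the image of $\widehat{\mathcal{L}}(L)$ under the natural map $\HFK(-Y,K)\to\HF(-Y,\mathfrak{s}_\xi)$ induced by inclusion of the top filtration level is the contact invariant $c(\xi)$. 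Since $c(\xi)\neq 0$ by hypothesis, $\widehat{\mathcal{L}}(L)$ is itself nonzero, so it is a nontrivial cycle sitting at a well-defined rational Alexander grading.

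The second, and most delicate, step is to compute this Alexander grading in terms of the rational classical invariants. For a rationally null-homologous knot the Alexander grading is a $\mathbb{Q}$--valued function on relative Spin$^c$ structures, normalized using $F$ and the order $q$ of $[K]$ in $H_1(Y;\mathbb{Z})$. Using the Baker--Etnyre description of $tb_{\mathbb{Q}}$ and $rot_{\mathbb{Q}}$ in terms of the contact framing and the winding of the contact planes along $F$, I would show that, with the orientation conventions fixed by passing to $-Y$, the cycle representing $\widehat{\mathcal{L}}(L)$ occurs at filtration level
\[
\frac{1}{2}\bigl(tb_{\mathbb{Q}}(L)+rot_{\mathbb{Q}}(L;F)+1\bigr).
\]
This is the rational analogue of Hedden's grading calculation; the transverse pushoff of $L$, taken with the orientation making its rational self-linking number equal to $tb_{\mathbb{Q}}(L)+rot_{\mathbb{Q}}(L;F)$, provides the bridge between the contact data and the knot filtration.

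Finally, I would invoke the definition of $\tau^{\ast}_{c(\xi)}(Y,K,F)$ as the extremal filtration level at which the nonzero contact class $c(\xi)$ is realized in the knot-filtered complex. Since $\widehat{\mathcal{L}}(L)$ is a nonzero cycle at the level displayed above whose image is exactly $c(\xi)$, the defining extremal property of $\tau^{\ast}$ forces
\[
\frac{1}{2}\bigl(tb_{\mathbb{Q}}(L)+rot_{\mathbb{Q}}(L;F)+1\bigr)\le \tau^{\ast}_{c(\xi)}(Y,K,F),
\]
which rearranges at once into the claimed inequality \eqref{plus-1}.

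The step I expect to be the main obstacle is the grading computation of the second paragraph. Unlike the integral case, the rational Seifert surface forces fractional Alexander gradings normalized by $q$, and one must verify that the Legendrian cycle sits at precisely the grading dictated by $tb_{\mathbb{Q}}$ and $rot_{\mathbb{Q}}$, with no leftover correction coming from the framing defect of $F$ or from the identification of relative Spin$^c$ structures under $Y\mapsto -Y$. Checking that these normalizations cancel correctly, so that the displayed filtration level is exactly $\tfrac{1}{2}(tb_{\mathbb{Q}}+rot_{\mathbb{Q}}+1)$ and not merely proportional to it, is the technical heart of the argument.
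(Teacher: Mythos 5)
Your proposal takes a genuinely different route from the paper. The paper never constructs a Legendrian invariant: it stabilizes $L$ until its contact framing is $\lambda_{can}+(-n+1)\mu$, performs Legendrian surgery to obtain a Stein cobordism $(W,J)$ from $(Y,\xi)$ to $(Y_{-n}(K),\xi_L)$, uses naturality of the contact class, $\hat{F}_{\overline{W}}(c(\xi_{L}))=c(\xi)$, together with the pairing and Proposition~\ref{prop: cobordismmap2} to get $k_{\mathfrak{s}_{\xi},F}+k\leq\tau^{\ast}_{c(\xi)}(Y,K,F)$, and identifies $tb_{\mathbb{Q}}+rot_{\mathbb{Q}}$ with $2(k_{\mathfrak{s}_{\xi},F}+k)+1$ via Lemmas~\ref{lemma: rot} and \ref{lemma: tb}, whose inputs are Raoux's surgery formula \cite[Theorem 4.2]{r} and Gompf's computation of $c_{1}$ of a Stein $2$-handle \cite{g}. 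That yields only $tb_{\mathbb{Q}}+rot_{\mathbb{Q}}\leq 2\tau^{\ast}+1$; the sharp bound comes from the connected-sum trick (Propositions~\ref{prop: additivity} and \ref{prop: add}) and then a parity argument (Lemma~\ref{lemma:parity}). Your route, if completed, would give the $-1$ in one step, which is a real structural advantage; but as written it has genuine gaps.

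Concretely: (i) a LOSS-type class $\widehat{\mathcal{L}}(L)\in\HFK(-Y,K)$ for \emph{rationally} null-homologous knots, with well-defined rational Alexander grading and with image equal to $c(\xi)$, is asserted to exist ``by the same mechanism'' but is not available in the literature and is not constructed here; the cycle itself can be written down from an open book, but the properties you use require proof. (ii) The grading formula $A=\tfrac12\bigl(tb_{\mathbb{Q}}(L)+rot_{\mathbb{Q}}(L;F)+1\bigr)$, which you correctly flag as the technical heart, is not a routine verification: even in the integral case it is the hardest part of the Lisca--Ozsv\'ath--Stipsicz--Szab\'o theory, and in the rational case it would have to be redone with Baker--Etnyre's framing conventions --- in other words, your plan reduces the theorem to an unproved claim at least as hard as the theorem, whereas the paper obtains the equivalent bookkeeping cheaply from the surgery cobordism. (iii) There is also a logical slip in your last step: $\tau^{\ast}_{c(\xi)}(Y,K,F)$ is a \emph{minimum} (Definition~\ref{def:tau*}), taken over filtration levels of $\CF(Y,\mathfrak{s}_{\xi})$, so exhibiting a cycle that realizes $c(\xi)$ at a given level bounds a minimum from \emph{above}, not below. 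The inequality you want only follows by working in the filtration of $\CF(-Y,\mathfrak{s}_{\xi})$, where your cycle gives $\tau_{c(\xi)}(-Y,K,F)\leq -\tfrac12\bigl(tb_{\mathbb{Q}}+rot_{\mathbb{Q}}+1\bigr)$, and then applying the duality $\tau^{\ast}_{c(\xi)}(Y,K,F)=-\tau_{c(\xi)}(-Y,K,F)$ of Proposition~\ref{prop: dual}; without invoking that duality (or reproving it), the ``defining extremal property'' argument as stated runs in the wrong direction.
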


\bigskip
A closed 3-manifold $Y$ is called an \emph{L-space} if it is a rational homology sphere and $\text{rank}\widehat{HF}(Y)$ $=$ $|H_{1}(Y)|$. A knot $K$ in an L-space $Y$ is called \emph{Floer simple} if $\text{rank}\widehat{HFK}(Y,K)$ $=$ $\text{rank}\widehat{HF}(Y)$. Our next result shows that the rational $\tau$ invariant of a Floer simple knot in an L-space $Y$ can be expressed in terms of the correction terms of $Y$; in particular, it depends only on the order of the knot (rather than its isotopy class).

\begin{proposition}\label{prop:tau}

For a Floer simple knot $K$ in an L-space $Y$,

$$2\tau_\mathfrak{s}(Y,K) = d(Y, \mathfrak{s})-d(Y, J\mathfrak{s}+\text{PD}[K]).$$

\end{proposition}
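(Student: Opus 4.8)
The plan is to show that for a Floer simple knot the Alexander-filtered complex collapses to a single surviving generator in each $\mathrm{Spin}^c$ structure, and then to read off the identity from the conjugation symmetry of knot Floer homology together with the interpretation of Maslov gradings as correction terms.

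First I would exploit Floer simpleness. Since $Y$ is an L-space, $\HF(Y,\mathfrak{s})$ has rank one for every $\mathfrak{s}$, so $\mathrm{rank}\,\HF(Y)=|H_1(Y)|$ is exactly the number of $\mathrm{Spin}^c$ structures; the hypothesis $\mathrm{rank}\,\HFK(Y,K)=\mathrm{rank}\,\HF(Y)$ then forces $\HFK(Y,K,\mathfrak{s})$ to have rank one for each $\mathfrak{s}$. Consequently the spectral sequence from the associated graded object $\HFK$ to $\HF(Y)$ must degenerate (ranks can only drop, and here they already agree), so within the summand $\mathfrak{s}$ there is a single generator $x_\mathfrak{s}$ that survives and generates $\HF(Y,\mathfrak{s})$. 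I would record two facts about $x_\mathfrak{s}$: its Maslov grading is the correction term, $M(x_\mathfrak{s})=d(Y,\mathfrak{s})$, and—because the filtration of $\CF(Y,\mathfrak{s})$ has a unique jump, at the Alexander grading of $x_\mathfrak{s}$—the induced map $H_*(\mathcal{F}_i)\to\HF(Y,\mathfrak{s})$ first becomes surjective at $i=A(x_\mathfrak{s})$. Matching this against the definition of the rational $\tau$ invariant gives $\tau_\mathfrak{s}(Y,K)=A(x_\mathfrak{s})$, once the normalization (sign) convention is checked.

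Next I would invoke the conjugation symmetry. In the rational setting this identifies $\HFK(Y,K,\mathfrak{s})$ in Alexander grading $A$ and Maslov grading $M$ with $\HFK(Y,K,J\mathfrak{s}+\text{PD}[K])$ in Alexander grading $-A$ and Maslov grading $M-2A$; the shift by $\text{PD}[K]$ records exactly how the conjugate relative $\mathrm{Spin}^c$ structure on the knot complement projects to $Y$. Applying this to $x_\mathfrak{s}$, its conjugate lies in $\HFK(Y,K,J\mathfrak{s}+\text{PD}[K])$ with Alexander grading $-A(x_\mathfrak{s})$ and Maslov grading $d(Y,\mathfrak{s})-2A(x_\mathfrak{s})$. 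By the rank-one conclusion of the first step, this conjugate must be $x_{J\mathfrak{s}+\text{PD}[K]}$, whose Maslov grading is $d(Y,J\mathfrak{s}+\text{PD}[K])$. Equating the two expressions for the Maslov grading gives
\[
d(Y,J\mathfrak{s}+\text{PD}[K])=d(Y,\mathfrak{s})-2A(x_\mathfrak{s})=d(Y,\mathfrak{s})-2\tau_\mathfrak{s}(Y,K),
\]
which rearranges to the claimed identity.

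The main obstacle is the bookkeeping in step two: pinning down the precise form of the conjugation symmetry in the rational setting, in particular that conjugation carries the summand over $\mathfrak{s}$ to the summand over $J\mathfrak{s}+\text{PD}[K]$ (the appearance of $\text{PD}[K]$) and that the Maslov grading shifts by $-2A$ while the Alexander grading flips sign. This demands care with the absolute rational Alexander and Maslov grading conventions and with the correspondence between relative $\mathrm{Spin}^c$ structures on $Y\setminus K$ and their restrictions to $Y$. By contrast, the degeneration argument of step one and the identification $\tau_\mathfrak{s}(Y,K)=A(x_\mathfrak{s})$ are routine once the ranks are pinned down, and the final sign is in any case fixed by consistency with the target formula.
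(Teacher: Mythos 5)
Your proposal is correct and follows essentially the same route as the paper: both use Floer simpleness plus the L-space condition to get a single generator per $\mathrm{Spin}^c$ structure with $\tau_\mathfrak{s}(Y,K)=A(\underline{\mathfrak{s}})$ and Maslov grading equal to $d(Y,\mathfrak{s})$, and then apply the conjugation symmetry $\CFK_*(Y,K,\underline{\mathfrak{s}})\cong\CFK_{*-2A(\underline{\mathfrak{s}})}(Y,K,\widetilde J\underline{\mathfrak{s}})$ together with $G_{Y,K}(\widetilde J\underline{\mathfrak{s}})=JG_{Y,K}(\underline{\mathfrak{s}})+\mathrm{PD}[K]$ to equate $2A(\underline{\mathfrak{s}})$ with the difference of correction terms. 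Your spectral-sequence phrasing of the rank count and the filtration-jump argument for $\tau_\mathfrak{s}=A$ is just a more explicit rendering of what the paper states directly, so there is nothing substantively different to flag.
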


While the precise definition of $\tau_\mathfrak{s}(Y,K)$ will be given later, we remark that $\tau_{\mathfrak{s}_{\xi}}(Y, K)=\tau^{\ast}_{c(\xi)}(Y, K, F)$ when $Y$ is an L-space with a nontrivial Ozsv\'{a}th-Szab\'{o} contact invariant $c(\xi)$ in the Spin$^c$ structure $\mathfrak{s}_{\xi}$.
Also note that $rot_{\mathbb{Q}}(L; F)$ is independent of $F$ when $Y$ is a rational homology sphere, and it may be abbreviated as $rot_{\mathbb{Q}}(L)$.
We have the following immediate corollary.

\begin{corollary}\label{cor1}
Suppose  $K$ is a Floer simple knot in an L-space $Y$, $\xi$ is a contact structure on $Y$ with nontrivial Ozsv\'{a}th-Szab\'{o} contact invariant $c(\xi)\in \widehat{HF}(-Y, \mathfrak{s}_{\xi})$. Then for any Legendrian representative $L$ of $K$, $$tb_{\mathbb{Q}}(L)+rot_{\mathbb{Q}}(L)\leq d(Y, \mathfrak{s}_{\xi})-d(Y, J(\mathfrak{s}_{\xi}+PD[K]))-1.$$
\end{corollary}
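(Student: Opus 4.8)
The plan is to assemble the corollary directly from the three ingredients already in place: Theorem \ref{thm: main}, the identification of $\tau^{\ast}_{c(\xi)}$ with $\tau_{\mathfrak{s}_{\xi}}$, and Proposition \ref{prop:tau}. First I would record that an L-space is by definition a rational homology sphere, so by the remark preceding the statement of Corollary \ref{cor1} the rational rotation number $rot_{\mathbb{Q}}(L; F)$ is independent of the choice of rational Seifert surface $F$ and may be written $rot_{\mathbb{Q}}(L)$. This is what licenses suppressing $F$ from the bound in the statement.

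Next, since $\xi$ has nontrivial Ozsv\'{a}th-Szab\'{o} contact invariant $c(\xi)$, Theorem \ref{thm: main} applies to every Legendrian representative $L$ of $K$ and yields
$$tb_{\mathbb{Q}}(L)+rot_{\mathbb{Q}}(L)\leq 2\tau^{\ast}_{c(\xi)}(Y, K, F)-1.$$
Because $Y$ is an L-space carrying a nontrivial contact invariant in the $\mathrm{Spin}^{c}$ structure $\mathfrak{s}_{\xi}$, the identity $\tau^{\ast}_{c(\xi)}(Y, K, F)=\tau_{\mathfrak{s}_{\xi}}(Y, K)$ noted after Proposition \ref{prop:tau} rewrites the right-hand side as $2\tau_{\mathfrak{s}_{\xi}}(Y, K)-1$. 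I would then substitute the closed formula of Proposition \ref{prop:tau} specialized to $\mathfrak{s}=\mathfrak{s}_{\xi}$, which expresses $2\tau_{\mathfrak{s}_{\xi}}(Y, K)$ as a difference of correction terms of $Y$, producing the desired inequality.

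The one step I expect to require genuine care — and hence the main obstacle — is the bookkeeping of $\mathrm{Spin}^{c}$ structures, namely matching the term supplied by Proposition \ref{prop:tau} with the precise class $d(Y, J(\mathfrak{s}_{\xi}+\mathrm{PD}[K]))$ appearing in the displayed bound. I would handle this using the conjugation symmetry of the correction terms, $d(Y,\mathfrak{s})=d(Y,J\mathfrak{s})$, together with the affine behavior of conjugation under the $H^{2}(Y;\mathbb{Z})$-action, $J(\mathfrak{s}+\mathrm{PD}[K])=J\mathfrak{s}-\mathrm{PD}[K]$, while keeping track of the fact that $c(\xi)$ is defined in $\widehat{HF}(-Y, \mathfrak{s}_{\xi})$ rather than in $\widehat{HF}(Y,\mathfrak{s}_{\xi})$; reconciling the orientation reversal and the conjugation is exactly where a convention mismatch could hide, so this is the part of the argument I would write out most explicitly rather than dismiss as routine.
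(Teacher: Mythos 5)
Your outline coincides with the paper's own (essentially one-line) proof: the corollary is obtained by chaining Theorem~\ref{thm: main}, the identification $\tau^{\ast}_{c(\xi)}(Y,K,F)=\tau_{\mathfrak{s}_{\xi}}(Y,K)$ recorded after Proposition~\ref{prop:tau}, and Proposition~\ref{prop:tau} itself, after observing that over a rational homology sphere (in particular an L-space) every knot is rationally null-homologous and $rot_{\mathbb{Q}}(L;F)$ does not depend on $F$. So the first three steps of your plan are exactly what the paper does, and they are correct.

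The caveat concerns the step you isolate as the main obstacle. The identities you propose to use cannot identify the two candidate right-hand sides, because those are genuinely different in general. Indeed, $d(Y,\mathfrak{t})=d(Y,J\mathfrak{t})$ and $J(\mathfrak{t}+h)=J\mathfrak{t}-h$ give
\[
d\bigl(Y, J(\mathfrak{s}_{\xi}+\mathrm{PD}[K])\bigr)=d\bigl(Y, \mathfrak{s}_{\xi}+\mathrm{PD}[K]\bigr),
\qquad
d\bigl(Y, J\mathfrak{s}_{\xi}+\mathrm{PD}[K]\bigr)=d\bigl(Y, \mathfrak{s}_{\xi}-\mathrm{PD}[K]\bigr),
\]
and there is no general reason for $d(Y,\mathfrak{s}_{\xi}+\mathrm{PD}[K])$ to equal $d(Y,\mathfrak{s}_{\xi}-\mathrm{PD}[K])$; they agree, for instance, when $\mathrm{PD}[K]$ is $2$-torsion (as in the paper's $L(4,1)$ example with a knot of order $2$), but not for an arbitrary $K$. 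The mismatch is in fact a parenthesization inconsistency in the paper itself: the derivation of Proposition~\ref{prop:tau} in Section~5 rests on \eqref{conjugaterelation}, $G_{Y,K}(\widetilde{J}\underline{\mathfrak{s}})=JG_{Y,K}(\underline{\mathfrak{s}})+\mathrm{PD}[K]$, so the correction term appearing there is unambiguously that of $(J\mathfrak{s})+\mathrm{PD}[K]$, and what your argument (and the paper's) actually establishes is
\[
tb_{\mathbb{Q}}(L)+rot_{\mathbb{Q}}(L)\leq d(Y,\mathfrak{s}_{\xi})-d\bigl(Y,(J\mathfrak{s}_{\xi})+\mathrm{PD}[K]\bigr)-1 .
\]
So rather than attempting to prove the two $d$-terms equal (that equality is not available), you should state the conclusion in this form and note that the displayed expression in Corollary~\ref{cor1} must be read with this parenthesization; with that reading, your proof is complete and is the paper's intended one.
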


\bigskip
The remaining part of this paper is organized as follows.  In Section 2, we review Alexander filtration on knot Floer complex and use it to define a rational $\tau$ invariant associated to a knot in a 3-manifold possessing non-vanishing Floer (co)homology classes.  In Section 3, we recall the notions of rational Thurston-Bennequin invariant and rational rotation number. In particular, we exhibit how these two invariants behave under connected sum of two Legendrian knots.  In Section 4, we prove Theorem \ref{thm: main}.  In Section 5, we study in more detail the case of Floer simple knots in L-spaces. We show that rational $\tau$ invariants are determined by the correction terms. In Section 6, we specialize further to an example of Legendrian representatives of simple knots in lens spaces.

\vspace{5pt}\noindent{\bf Acknowledgements.}  This work was carried out while the first author was visiting the Chinese University of Hong Kong and he would like to thank for their hospitality. The first author was partially supported by grant no.\ 11471212 of the National Natural Science Foundation of China.  The second author was partially supported by grant from the Research Grants Council of the Hong Kong Special Administrative Region, China (Project No.\ 24300714).

\section{Rational $\tau$ invariants} \label{sec: invariant}

Let $K$ be a knot in $Y$ and $(\Sigma, \boldsymbol{\alpha}, \boldsymbol{\beta}, w, z)$ be a corresponding doubly pointed Heegaard diagram. Then the set of relative Spin$^{c}$-structures determine a filtration of the chain complex $\widehat{CF}(Y)$ via a map
$$\mathfrak{s}_{w,z}:\mathbb{T}_{\alpha}\cap\mathbb{T}_{\beta}\rightarrow \underline{\text{Spin}^{c}}(Y,K).$$
Each relative Spin$^{c}$ structure $\underline{\mathfrak{s}}$ for $(Y,K)$ corresponds to a Spin$^{c}$ structure $\mathfrak{s}$ on $Y$ via a natural map $G_{Y,K}:\underline{\text{Spin}^{c}(}Y,K)\rightarrow \text{Spin}^{c}(Y)$.  

From now on, assume that $K$ is a rationally null-homologous  knot in a 3-manifold $Y$, and $[K]$ is of order $q$ in $H_{1}(Y;\mathbb{Z})$. A \emph{rational Seifert surface}  for $K$ is defined to be a map $j:F\rightarrow Y$ from a connected compact orientable surface $F$ to $Y$ that is an embedding of the interior of $F$ into $Y\setminus K$, and a $q$-fold cover  from its boundary $\partial F$ to $K$.  Let $N(K)$ be a tubular neighborhood of $K$ in $Y$, and $\mu\subset \partial N(K)$ the meridian of $K$.  We can assume that $F\cap \partial N(K)$ consists of $c$ parallel cooriented simple closed curves, each of which has homology $[\nu]\in H_{1}(\partial N(K); \mathbb{Z})$. We can choose a canonical longitude $\lambda_{can}$ such that $[\nu]=t[\lambda_{can}]+r[\mu]$, where $t$ and $r$ are coprime integers, and $0\leq r<t$. Note that $ct=q$.

Suppose $K$ corresponds to a doubly pointed Heegaard diagram $(\Sigma, \boldsymbol{\alpha}, \boldsymbol{\beta}, w, z)$.  Fix a rational Seifert surface $F$ for $K$.  Following Ni \cite
{n},\footnote{Ni's original definition assumes that $Y$ is a rational homology sphere.} we define the Alexander grading of a relative Spin$^{c}$-structure $\underline{\mathfrak{s}} \in \underline{\text{Spin}^{c}(}Y,K) $ by

\begin{equation}\label{AlexGrading}
A_{F}(\underline{\mathfrak{s}})=\frac{1}{2q}(\langle  c_{1}(\underline{\mathfrak{s}}), [\tilde{F}] \rangle-q),
\end{equation}
where $\tilde{F}$ is the closure of $j(F)\setminus N(K)$.

Moreover, the Alexander grading of an intersection point $x\in \mathbb{T}_{\alpha}\cap\mathbb{T}_{\beta}$  is defined by
$$A_{F}(x)=\frac{1}{2q}(\langle c_{1}(\mathfrak{s}_{w,z}(x)), [\tilde{F}]\rangle-q).$$

In general, the Alexander grading $A_{F}$ takes values in rational number $\mathbb{Q}$.  Nonetheless, observe that for any two relative Spin$^c$ structures $\underline{\mathfrak{s}}_1, \underline{\mathfrak{s}}_2 \in G_{Y,K}^{-1} (\mathfrak{s})$ of a fixed $\mathfrak{s}$, we have $\underline{\mathfrak{s}}_2-\underline{\mathfrak{s}}_1=k \, \text{PD}[\mu]$ for some integer $k$.  Hence, there exists a unique rational number $k_{\mathfrak{s}, F}\in[-\frac{1}{2}, \frac{1}{2})$ depending only on $\mathfrak{s}$ and $F$ such that for every $\underline{\mathfrak{s}}\in G_{Y,K}^{-1}(\mathfrak{s})$,  $$\frac{1}{2q}(\langle c_{1}(\underline {\mathfrak{s}}), [\tilde{F}]\rangle-q)=k_{\mathfrak{s}, F}+k.$$
for some integer $k$ \cite{r}.

As a result, the Alexander grading induces effectively a $\mathbb{Z}$-filtration of $\widehat{CF}(Y, \mathfrak{s})$ by
$$\mathcal{F}_{\mathfrak{s},k}=\{x\in\widehat{CF}(Y, \mathfrak{s})|A_{F}(x)\leq k_{\mathfrak{s}, F}+k\},$$ where $k\in\mathbb{Z}$.
Let  $i_{k}:\mathcal{F}_{\mathfrak{s},k}\rightarrow \widehat{CF}(Y, \mathfrak{s})$  be the inclusion map. It induces a homomorphism between the homologies $I_{k}:H_{\ast}(\mathcal{F}_{\mathfrak{s},k})\rightarrow \widehat{HF}(Y, \mathfrak{s}).$

Next we introduce two rational $\tau$ invariants in the same way as Hedden did for integrally null-homologous knots  \cite{h}.

\begin{definition}\label{def:tau}
For any $[x]\neq0\in \widehat{HF}(Y, \mathfrak{s})$, define
$$\tau_{[x]}(Y,K, F)=\text{min}\{k_{\mathfrak{s}, F}+k|[x]\in \text{Im}(I_{k})\}.$$
\end{definition}

Consider the orientation reversal $-Y$ of $Y$,  we have the paring
$$\langle-,-\rangle:\widehat{CF}(-Y,\mathfrak{s})\otimes\widehat{CF}(Y,\mathfrak{s})\rightarrow \mathbb{Z}/2\mathbb{Z},$$
given by
$$\langle x,y\rangle=
\begin{cases}
1& \text{if}~~   x=y,\\
0& \text{otherwise}.
\end{cases}$$
It descends to a pairing
$$\langle-,-\rangle:\widehat{HF}(-Y,\mathfrak{s})\otimes\widehat{HF}(Y,\mathfrak{s})\rightarrow \mathbb{Z}/2\mathbb{Z}.$$

\begin{definition}\label{def:tau*}
For any $[y]\neq0\in \widehat{HF}(-Y, \mathfrak{s})$, define
$$\tau^{\ast}_{[y]}(Y, K, F)=\text{min}\{k_{\mathfrak{s}, F}+k|\exists\alpha\in \text{Im}(I_{k}), \text{such that} \langle[y], \alpha\rangle\neq 0\}.$$
\end{definition}

Using the same argument as in the proof of \cite[Proposition 28]{h}, we have the following duality.

\begin{proposition} \label{prop: dual}
Let $[y]\neq0\in\widehat{HF}(-Y,\mathfrak{s})$. Then
$$\tau_{[y]}(-Y, K, F)=-\tau^{\ast}_{[y]}(Y, K, F).$$
\end{proposition}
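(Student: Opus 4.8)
The plan is to use the fact, set up by the pairing $\langle-,-\rangle$ in the excerpt, that at the chain level $\widehat{CF}(-Y,\mathfrak{s})$ is the $\mathbb{F}_2$-dual of $\widehat{CF}(Y,\mathfrak{s})$, the pairing identifying the two sets of intersection-point generators and the differential on $-Y$ being the transpose of the one on $Y$. Under orientation reversal the relative gradings flip sign, so $A_F^{-Y}(x)=-A_F^{Y}(x)$ (and $k^{-Y}_{\mathfrak{s},F}\equiv -k^{Y}_{\mathfrak{s},F}\bmod\mathbb{Z}$). First I would record carefully that, writing $\ell=k_{\mathfrak{s},F}+k$ for a filtration value on $Y$, the Alexander filtration $\mathcal{F}^{-Y}_{\mathfrak{s}}$ of $\widehat{CF}(-Y,\mathfrak{s})$ is exactly the annihilator filtration of $\mathcal{F}^{Y}_{\mathfrak{s}}$: the subcomplex $\text{Ann}(\mathcal{F}^{Y}_{\mathfrak{s},\ell})$ of functionals vanishing on $\{A_F^{Y}\leq\ell\}$ is spanned by the generators with $A_F^{-Y}<-\ell$, i.e.\ it is the $-Y$ subcomplex at levels strictly below $-\ell$. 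The bookkeeping of the fractional shift constants must be arranged so that the filtration values on $-Y$ are precisely the negatives of those on $Y$.

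The core is a homological-algebra duality between the inclusion-induced maps. Abbreviate $I_\ell:=I_k$ with $\ell=k_{\mathfrak{s},F}+k$, and write $I^{-Y}_{\ell'}$ for the analogous maps on $-Y$. Applying $\text{Hom}(-,\mathbb{F}_2)$ to $0\to\mathcal{F}^{Y}_{\mathfrak{s},\ell}\to\widehat{CF}(Y,\mathfrak{s})\to Q\to 0$ (with $Q$ the quotient) gives $0\to\text{Ann}(\mathcal{F}^{Y}_{\mathfrak{s},\ell})\to\widehat{CF}(-Y,\mathfrak{s})\to(\mathcal{F}^{Y}_{\mathfrak{s},\ell})^{\ast}\to 0$; since we work over a field this is exact and the two long exact sequences are dual to one another. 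The relevant portions read $\text{Im}(I_\ell)=\ker\big(\widehat{HF}(Y,\mathfrak{s})\to H_*(Q)\big)$ and $\text{Im}(I^{-Y}_{<-\ell})=\ker\big(\widehat{HF}(-Y,\mathfrak{s})\to H_*((\mathcal{F}^{Y}_{\mathfrak{s},\ell})^{\ast})\big)$. By naturality of the pairing, the restriction map $\widehat{HF}(-Y,\mathfrak{s})\to H_*((\mathcal{F}^{Y}_{\mathfrak{s},\ell})^{\ast})=(H_*(\mathcal{F}^{Y}_{\mathfrak{s},\ell}))^{\ast}$ is exactly the transpose $(I_\ell)^{\ast}$ of $I_\ell$ (dualization commutes with homology over a field). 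Using $\ker((I_\ell)^{\ast})=\text{Ann}(\text{Im}(I_\ell))$, valid for any linear map over a field, I obtain the key identity
$$\text{Im}(I^{-Y}_{<-\ell})=\text{Ann}\big(\text{Im}(I_\ell)\big).$$

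With this in hand the proposition is a short computation. For $[y]\neq 0$, since $[y]$ pairs nontrivially with $\text{Im}(I_\ell)$ exactly when $[y]\notin\text{Ann}(\text{Im}(I_\ell))$, Definition~\ref{def:tau*} reads $\tau^{\ast}_{[y]}(Y,K,F)=\min\{\ell:[y]\notin\text{Ann}(\text{Im}(I_\ell))\}$. Substituting the key identity and noting that $[y]\in\text{Im}(I^{-Y}_{<-\ell})$ holds iff $[y]$ admits a representative cocycle of filtration level $<-\ell$, hence iff $\tau_{[y]}(-Y,K,F)<-\ell$ by Definition~\ref{def:tau}, I get
$$\tau^{\ast}_{[y]}(Y,K,F)=\min\{\ell:\tau_{[y]}(-Y,K,F)\geq-\ell\}=-\tau_{[y]}(-Y,K,F),$$
which is the claimed duality. (The last minimum is attained because $-\tau_{[y]}(-Y,K,F)$ lies in the set $k^{Y}_{\mathfrak{s},F}+\mathbb{Z}$ over which $\ell$ ranges.)

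I expect the main obstacle to be the first paragraph: pinning down the precise compatibility of the two filtrations under orientation reversal, in particular matching the fractional shift constants $k^{Y}_{\mathfrak{s},F}$ and $k^{-Y}_{\mathfrak{s},F}$ and handling the strict versus non-strict inequality ($<-\ell$ versus $\leq-\ell$) correctly, so that \emph{the minimal level admitting a representative on} $-Y$ lines up exactly with \emph{the pairing threshold on} $Y$ without an off-by-one (or off-by-$k_{\mathfrak{s},F}$) error. Once the filtrations are correctly identified as annihilator-dual, the homological duality and the final computation are formal.
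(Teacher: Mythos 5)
Your proof is correct, and it is essentially the paper's own argument: the paper proves this proposition by invoking the argument of Hedden's Proposition 28, which is exactly the duality you reconstruct — identify $\widehat{CF}(-Y,\mathfrak{s})$ with the $\mathbb{Z}/2\mathbb{Z}$-dual complex, show the Alexander filtration on $-Y$ is the annihilator filtration (with the sign flip and strict/non-strict shift), and conclude via $\ker\bigl((I_\ell)^{\ast}\bigr)=\mathrm{Ann}\bigl(\mathrm{Im}(I_\ell)\bigr)$. Your added bookkeeping of the fractional shift constants $k_{\mathfrak{s},F}$ is precisely the extra ingredient needed to adapt Hedden's integral argument to the rational setting.
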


For $i=1, 2$, let $K_{i}$ be a rationally null-homologous  knot in a 3-manifold $Y_{i}$ with order $q_{i}$, and $j:F_{i}\rightarrow Y_{i}$ be a rational Seifert surface for $K_{i}$.   Let $K_{1}\sharp K_{2}$ denote their connected sum in $Y_{1}\sharp Y_{2}$. Then the order of $K_{1}\sharp K_{2}$ is $\mathrm{lcm}(q_1, q_2)$, that is, the least common multiple of $q_1$ and $q_2$.   One can construct a rational Seifert surface for $K_{1}\sharp K_{2}$ by taking $\frac{\mathrm{lcm}(q_1, q_2)}{q_{1}}$ copies of $j:F_{1}\rightarrow Y_{1}$ and $\frac{\mathrm{lcm}(q_1, q_2)}{q_{2}}$ copies of $j:F_{2}\rightarrow Y_{2}$ and gluing them in an appropriate way. See the next section. We denote it by $j:F_{1}\natural F_{2}\rightarrow Y_{1}\sharp Y_{2}$.

By \cite[Lemma 3.8]{r}, for $x_{1}\in \widehat{CF}(Y_{1})$ and $x_{2}\in \widehat{CF}(Y_{2})$, we have $$A_{F_{1}\natural F_{2}}(x_{1}\otimes x_{2})=A_{F_{1}}(x_{1})+A_{F_{2}}(x_{2}).$$ So we can use the same argument as in the proof of  \cite[Proposition 29]{h} to obtain the following proposition.

\begin{proposition} \label{prop: additivity}
 For any $[x_{i}]\neq0\in\widehat{HF}(Y_{i}, \mathfrak{s}_{i})$, $[y_{i}]\neq0\in\widehat{HF}(-Y_{i}, \mathfrak{s}_{i})$, $i=1,2$, we have
$$\tau_{[x_1]\otimes[x_2]}(Y_{1}\sharp Y_{2}, K_{1}\sharp K_{2}, F_{1}\natural F_{2})=\tau_{[x_1]}(Y_{1}, K_{1}, F_{1})+\tau_{[x_2]}(Y_{2}, K_{2}, F_{2}),$$
and
$$\tau^{\ast}_{[y_1]\otimes[y_2]}(Y_{1}\sharp Y_{2}, K_{1}\sharp K_{2}, F_{1}\natural F_{2})=\tau^{\ast}_{[y_1]}(Y_{1}, K_{1}, F_{1})+\tau^{\ast}_{[y_2]}(Y_{2}, K_{2}, F_{2}).$$
\end{proposition}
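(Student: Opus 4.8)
The plan is to reduce the proposition to a statement about filtered tensor products of chain complexes over $\mathbb{F}_2$, and then to derive the two asserted equalities from one another by playing $\tau$ and $\tau^{*}$ against each other through the duality of Proposition~\ref{prop: dual}. First I would set up the connected sum as a filtered tensor product. A doubly pointed Heegaard diagram for $(Y_1\sharp Y_2, K_1\sharp K_2)$ is obtained by connect-summing diagrams for the two factors, so the Ozsv\'ath--Szab\'o connected sum formula identifies $\widehat{CF}(Y_1\sharp Y_2,\mathfrak{s}_1\sharp\mathfrak{s}_2)$ with $\widehat{CF}(Y_1,\mathfrak{s}_1)\otimes_{\mathbb{F}_2}\widehat{CF}(Y_2,\mathfrak{s}_2)$ as chain complexes, the generators being the tensors $x_1\otimes x_2$. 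The additivity $A_{F_{1}\natural F_{2}}(x_1\otimes x_2)=A_{F_1}(x_1)+A_{F_2}(x_2)$ quoted above shows that under this identification the Alexander filtration on the product is the tensor-product filtration, i.e. $\mathcal{F}_{\mathfrak{s}_1,k_1}\otimes\mathcal{F}_{\mathfrak{s}_2,k_2}\subseteq\mathcal{F}_{\mathfrak{s}_1\sharp\mathfrak{s}_2,k_1+k_2}$, and that the offsets satisfy $k_{\mathfrak{s}_1\sharp\mathfrak{s}_2,F_1\natural F_2}\equiv k_{\mathfrak{s}_1,F_1}+k_{\mathfrak{s}_2,F_2}\pmod 1$. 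Consequently $\tau_{[x]}$ of Definition~\ref{def:tau} is exactly the least Alexander level at which $[x]$ is represented, and level bookkeeping is insensitive to the integer normalization. I would also record that the chain-level pairing is multiplicative, $\langle x_1\otimes x_2, y_1\otimes y_2\rangle=\langle x_1,y_1\rangle\langle x_2,y_2\rangle$, and that this descends to $\langle[y_1]\otimes[y_2],\alpha_1\otimes\alpha_2\rangle=\langle[y_1],\alpha_1\rangle\langle[y_2],\alpha_2\rangle$ on homology.

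Next I would establish two \emph{easy} inequalities by tensoring representatives. If $[x_i]=I_{k_i}(\gamma_i)$ is represented at level $s_i$, then $\gamma_1\otimes\gamma_2$ lands in $\mathcal{F}_{\mathfrak{s}_1\sharp\mathfrak{s}_2,k_1+k_2}$ and maps to $[x_1]\otimes[x_2]$, so $[x_1]\otimes[x_2]\in\mathrm{Im}(I_{k_1+k_2})$ at level $s_1+s_2$; taking $s_i=\tau_{[x_i]}$ gives $\tau_{[x_1]\otimes[x_2]}\le\tau_{[x_1]}+\tau_{[x_2]}$. The identical construction, combined with multiplicativity of the pairing, shows for $\tau^{*}$ that if $\alpha_i\in\mathrm{Im}(I_{k_i})$ with $\langle[y_i],\alpha_i\rangle\neq0$ at level $s_i$, then $\alpha_1\otimes\alpha_2\in\mathrm{Im}(I_{k_1+k_2})$ with $\langle[y_1]\otimes[y_2],\alpha_1\otimes\alpha_2\rangle\neq0$, whence $\tau^{*}_{[y_1]\otimes[y_2]}\le\tau^{*}_{[y_1]}+\tau^{*}_{[y_2]}$. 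Both inequalities hold for arbitrary summands and arbitrary ambient manifolds.

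The reverse inequalities I would obtain for free by flipping the easy ones through duality. Applying Proposition~\ref{prop: dual} with the ambient manifold replaced by $-Y$ gives $\tau^{*}_{[x]}(-Y)=-\tau_{[x]}(Y)$ for $[x]\in\widehat{HF}(Y)$. Now apply the easy $\tau^{*}$ inequality of the previous paragraph to the manifolds $-Y_1,-Y_2$ and classes $[x_i]\in\widehat{HF}(Y_i)=\widehat{HF}(-(-Y_i))$, and rewrite every term with this duality (using $-Y_1\sharp-Y_2=-(Y_1\sharp Y_2)$): the inequality becomes $\tau_{[x_1]\otimes[x_2]}(Y_1\sharp Y_2)\ge\tau_{[x_1]}(Y_1)+\tau_{[x_2]}(Y_2)$, which is precisely the reverse of the easy $\tau$ inequality. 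Together these prove the additivity of $\tau$. Symmetrically, applying the easy $\tau$ inequality to $-Y_1,-Y_2$ and dualizing yields the reverse $\tau^{*}$ inequality; equivalently, the $\tau^{*}$ statement now follows formally from the $\tau$ statement and Proposition~\ref{prop: dual}.

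The genuinely technical point is the first step: verifying that the connected sum formula is an isomorphism of \emph{filtered} complexes carrying the tensor-product filtration — in particular that the normalizing offsets $k_{\mathfrak{s},F}$ add up correctly under $F_1\natural F_2$ and that the Heegaard-diagram pairing is respected by the K\"unneth identification. Once the filtered tensor structure and the multiplicativity of the pairing are in place, the remaining steps are formal, and the only cleverness is the observation that duality converts each easy inequality into the reverse inequality for the companion invariant, so that one never has to argue the hard direction by hand.
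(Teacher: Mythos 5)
Your proposal is correct and takes essentially the same route as the paper: the paper's proof consists of citing Raoux's additivity of Alexander gradings (i.e.\ the filtered K\"unneth identification you set up) and then invoking the argument of Hedden's Proposition 29, which is exactly the argument you give — tensoring filtered representatives to get the two easy sub-additivity inequalities, then using the duality $\tau_{[y]}(-Y,K,F)=-\tau^{\ast}_{[y]}(Y,K,F)$ of Proposition~\ref{prop: dual} to flip each easy inequality into the reverse inequality for the companion invariant.
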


Let  $X_{-n}(K)$ be the cobordism from $Y$ to $Y_{-n}(K)$ obtained by attaching a 4-dimensional 2-handle to $K\times {1}\subset Y\times [0,1]$ with $(-n)$-framing with respect to the canonical longitude.  Suppose $\mathfrak{r}_{k}$ is the restriction to $Y_{-n}(K)$ of the unique Spin$^{c}$ structure $\mathfrak{t}_{k}$ on $X_{-n}(K)$ satisfying $\mathfrak{t}_{k}|_{Y}=\mathfrak{s}$ and
$$\langle c_{1}(\mathfrak{t}_{k}),[\tilde{F}\cup qC]\rangle-nq-cr=2q(k_{\mathfrak{s}, F}+k),$$ where $C$ is the core of the added 2-handle in $X_{-n}(K)$, and $[\tilde{F}\cup qC]$ is a generator of $H_{2}(X_{-n}(K);\mathbb{Z})\cong\mathbb{Z}$.
We have the following homomorphism between homology induced by the above cobordism
$$\hat{F}^{\mathfrak{s}}_{-n,k} :\widehat{HF}(Y,\mathfrak{s})\rightarrow \widehat{HF}(Y_{-n}(K), \mathfrak{r}_{k}).$$

By \cite[Theorem 4.2]{r}, we have a commutative diagram
$$\xymatrix{
    \widehat{CF}(Y,\mathfrak{s}) \ar[rr]^(0.45){f^{\mathfrak{s}}_{-n,k}}\ar[d]_{\cong} & & \widehat{CF}(Y_{-n}(K), \mathfrak{r}_{k})\ar[d]^{\cong} \\
     \mathcal{C}_{\mathfrak{s}}\{i=0\}\ar[rr]^(0.4){f^{\mathfrak{s}}_{-n,k}}&  & \mathcal{C}_{\mathfrak{s}}\{\text{min}(i,j-k)=0\}
  }$$ where $f^{\mathfrak{s}}_{-n,k}$ induces the map $\hat{F}^{\mathfrak{s}}_{-n,k}$ on homologies.
We then apply the argument of \cite[Proposition 24]{h} and \cite[Proposition 26]{h} to obtain the following two  propositions.

\begin{proposition} \label{prop: cobordismmap1}
Let $[x]\neq0\in \widehat{HF}(Y,\mathfrak{s})$ and $n>0$ be sufficiently large. We have\\
(1) If $k_{\mathfrak{s}, F}+k<\tau_{[x]}(Y, K, F)$, then $\hat{F}^{\mathfrak{s}}_{-n,k}([x])\neq0.$\\
(2) If $k_{\mathfrak{s}, F}+k>\tau_{[x]}(Y, K, F)$, then $\hat{F}^{\mathfrak{s}}_{-n,k}([x])=0.$
\end{proposition}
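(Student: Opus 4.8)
The plan is to combine the identification of the surgery cobordism map furnished by the commutative diagram above (that is, \cite[Theorem 4.2]{r}) with the long exact sequence of the filtration defining $\tau_{[x]}(Y,K,F)$, in the manner of \cite[Propositions 24 and 26]{h}. Throughout, $n$ is taken large enough that Raoux's identification is valid, so that on homology $\hat{F}^{\mathfrak{s}}_{-n,k}$ agrees with the map induced by the chain map $f^{\mathfrak{s}}_{-n,k}\colon \mathcal{C}_{\mathfrak{s}}\{i=0\}\to \mathcal{C}_{\mathfrak{s}}\{\min(i,j-k)=0\}$ of the bottom row; the normalization in the definition of $\mathfrak{r}_{k}$ is exactly what aligns the surgery index $k$ with the filtration index.

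First I would give a concrete description of this chain map. Under $\widehat{CF}(Y,\mathfrak{s})\cong\mathcal{C}_{\mathfrak{s}}\{i=0\}$, the $j$-coordinate differs from the Alexander grading $A_{F}$ by the constant $k_{\mathfrak{s},F}$, so that the subcomplex $\{i=0,\ j\le k-1\}$ is precisely the filtration piece $\mathcal{F}_{\mathfrak{s},k-1}$. Since $\{i\le 0\}\subseteq\{\min(i,j-k)\le 0\}$ and $\{i\le -1\}\subseteq\{\min(i,j-k)\le -1\}$, the bottom arrow is the tautological map on subquotients; on generators it sends $x$ (with $i=0$) to $x$ when $j\ge k$ and to $0$ when $j\le k-1$. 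Hence it factors as
\begin{equation*}
f^{\mathfrak{s}}_{-n,k}\colon\ \mathcal{C}_{\mathfrak{s}}\{i=0\}\ \xrightarrow{\ q_{k}\ }\ \mathcal{C}_{\mathfrak{s}}\{i=0\}/\mathcal{F}_{\mathfrak{s},k-1}\ \cong\ \mathcal{C}_{\mathfrak{s}}\{i=0,\ j\ge k\}\ \xrightarrow{\ \iota_{k}\ }\ \mathcal{C}_{\mathfrak{s}}\{\min(i,j-k)=0\},
\end{equation*}
where $q_{k}$ is the quotient projection and $\iota_{k}$ is the inclusion of $\{i=0,\ j\ge k\}$ as the vertical subcomplex of the target.

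With this factorization, statement (2) is immediate from the long exact sequence of the pair $(\mathcal{C}_{\mathfrak{s}}\{i=0\},\mathcal{F}_{\mathfrak{s},k-1})$,
\begin{equation*}
H_{\ast}(\mathcal{F}_{\mathfrak{s},k-1})\xrightarrow{\ I_{k-1}\ }\widehat{HF}(Y,\mathfrak{s})\xrightarrow{\ (q_{k})_{\ast}\ }H_{\ast}\!\left(\mathcal{C}_{\mathfrak{s}}\{i=0\}/\mathcal{F}_{\mathfrak{s},k-1}\right),
\end{equation*}
by which $(q_{k})_{\ast}[x]=0$ iff $[x]\in\text{Im}(I_{k-1})$. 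If $k_{\mathfrak{s},F}+k>\tau_{[x]}(Y,K,F)$, then by integrality of the filtration index and the definition of $\tau_{[x]}$ as a minimum we get $k_{\mathfrak{s},F}+(k-1)\ge\tau_{[x]}(Y,K,F)$, hence $[x]\in\text{Im}(I_{k-1})$ and $(q_{k})_{\ast}[x]=0$; therefore $\hat{F}^{\mathfrak{s}}_{-n,k}([x])=(\iota_{k})_{\ast}(q_{k})_{\ast}[x]=0$. For (1), when $k_{\mathfrak{s},F}+k<\tau_{[x]}(Y,K,F)$ the same sequence gives $[x]\notin\text{Im}(I_{k-1})$, i.e. $(q_{k})_{\ast}[x]\neq 0$.

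The remaining, and main, point of (1) is to show that $(\iota_{k})_{\ast}$ does not annihilate the nonzero class $(q_{k})_{\ast}[x]$. I would approach this through the short exact sequence
\begin{equation*}
0\to \mathcal{C}_{\mathfrak{s}}\{i=0,\ j\ge k\}\xrightarrow{\ \iota_{k}\ }\mathcal{C}_{\mathfrak{s}}\{\min(i,j-k)=0\}\to \mathcal{C}_{\mathfrak{s}}\{j=k,\ i\ge 1\}\to 0,
\end{equation*}
so that $\ker(\iota_{k})_{\ast}$ is the image of the connecting homomorphism out of the ``horizontal'' quotient $H_{\ast}(\mathcal{C}_{\mathfrak{s}}\{j=k,\ i\ge 1\})$. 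Following the argument of \cite[Proposition 26]{h}, the idea is that $(q_{k})_{\ast}[x]$ is represented by a cycle supported on the vertical line $i=0$, and any nullhomology of it inside the target that recruits the horizontal generators ($i\ge 1$) could, after subtracting a vertical boundary and exploiting that those horizontal contributions are controlled by the Alexander filtration, be converted into a cycle lying in $\mathcal{F}_{\mathfrak{s},k-1}$ and representing $[x]$ — contradicting $[x]\notin\text{Im}(I_{k-1})$. I expect this verification, namely that the horizontal part of the large-surgery complex cannot bound a vertically essential class, to be the crux of the argument; everything else is formal manipulation of the filtration and its long exact sequence.
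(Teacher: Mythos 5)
Your factorization of $f^{\mathfrak{s}}_{-n,k}$ as the quotient $q_{k}$ onto $\mathcal{C}_{\mathfrak{s}}\{i=0,\ j\ge k\}$ followed by the inclusion $\iota_{k}$, and your proof of part (2), are correct, and in spirit this is exactly the paper's route: the paper offers no independent argument, citing Raoux's diagram and then ``the argument of [h, Propositions 24 and 26]'', which is what you are reconstructing.

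The genuine gap is in part (1), precisely at the step you flag as the crux, and the problem is not only that you leave it unverified: the strategy you sketch targets the wrong filtration level, and pursued literally it would prove a false statement. If $\iota_{k}q_{k}(x)=\partial w$ with $w=w_{v}+w_{h}$ split into vertical and horizontal parts, the differential of $w_{h}$ does not stay in $\mathcal{C}_{\mathfrak{s}}\{i\ge 1,\ j=k\}$: it has a component $\delta(w_{h})$ landing in the corner $\{i=0,\ j=k\}$, whose filtration index is $k$, not $k-1$. Equating vertical parts gives $q_{k}(x)=\partial w_{v}+\delta(w_{h})$, so after lifting $w_{v}$ to $\mathcal{C}_{\mathfrak{s}}\{i=0\}$ and subtracting its boundary from $x$, the cycle representing $[x]$ that you produce lies in $\mathcal{F}_{\mathfrak{s},k}$, not in $\mathcal{F}_{\mathfrak{s},k-1}$. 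The contradiction must therefore be with $[x]\notin\text{Im}(I_{k})$ --- which is exactly what the strict hypothesis $k_{\mathfrak{s},F}+k<\tau_{[x]}(Y,K,F)$ supplies, and which you derived and then discarded in favor of the weaker statement $[x]\notin\text{Im}(I_{k-1})$. Your version, which would contradict $[x]\notin\text{Im}(I_{k-1})$, amounts to claiming $\hat{F}^{\mathfrak{s}}_{-n,k}([x])\neq 0$ whenever $k_{\mathfrak{s},F}+k\le\tau_{[x]}$, and this fails at equality. Concretely, for the right-handed trefoil in $S^{3}$ (so $k_{\mathfrak{s},F}=0$ and $\tau=1$), take $k=1$: the complex $\mathcal{C}\{i=0\}$ has generators $a,b,c$ of Alexander gradings $1,0,-1$ with $\partial b=c$, so $[x]=[a]$ and $\text{Im}(I_{0})=0$; yet in $\mathcal{C}\{\min(i,j-1)=0\}$ the horizontal generator $U^{-1}b$ satisfies $\partial(U^{-1}b)=a$ (its other term $U^{-1}c$ is killed in the quotient), so $\hat{F}^{\mathfrak{s}}_{-n,1}([a])=0$ even though $[a]\notin\text{Im}(I_{0})$.

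The repair is short, and cleaner than chasing nullhomologies. The horizontal piece \emph{including the corner}, $\mathcal{C}_{\mathfrak{s}}\{i\ge 0,\ j=k\}$, is a subcomplex of $\mathcal{C}_{\mathfrak{s}}\{\min(i,j-k)=0\}$, with quotient complex $\mathcal{C}_{\mathfrak{s}}\{i=0,\ j\ge k+1\}\cong\mathcal{C}_{\mathfrak{s}}\{i=0\}/\mathcal{F}_{\mathfrak{s},k}$, and the composition of $f^{\mathfrak{s}}_{-n,k}$ with this quotient projection is the canonical projection $\mathcal{C}_{\mathfrak{s}}\{i=0\}\rightarrow\mathcal{C}_{\mathfrak{s}}\{i=0\}/\mathcal{F}_{\mathfrak{s},k}$. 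By the long exact sequence of $0\rightarrow\mathcal{F}_{\mathfrak{s},k}\rightarrow\mathcal{C}_{\mathfrak{s}}\{i=0\}\rightarrow\mathcal{C}_{\mathfrak{s}}\{i=0\}/\mathcal{F}_{\mathfrak{s},k}\rightarrow 0$, the hypothesis $[x]\notin\text{Im}(I_{k})$ forces the image of $[x]$ in $H_{*}(\mathcal{C}_{\mathfrak{s}}\{i=0\}/\mathcal{F}_{\mathfrak{s},k})$ to be nonzero; since that image factors through $\hat{F}^{\mathfrak{s}}_{-n,k}([x])$, part (1) follows.
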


\begin{proposition} \label{prop: cobordismmap2}
Let $[y]\neq0\in \widehat{HF}(-Y,\mathfrak{s})$ and $n>0$ be sufficiently large. We have\\
(1) If $k_{\mathfrak{s}, F}+k<\tau^{\ast}_{[y]}(Y, K, F)$, then for every $\alpha\in\widehat{HF}(Y,\mathfrak{s})$ such that $\langle[y],\alpha\rangle\neq0$, we have $\hat{F}^{\mathfrak{s}}_{-n,k}(\alpha)\neq0.$\\
(2) If $k_{\mathfrak{s}, F}+k>\tau^{\ast}_{[y]}(Y, K, F)$, then there exists $\alpha\in\widehat{HF}(Y,\mathfrak{s})$ such that $\langle[y],\alpha\rangle\neq0$ and $\hat{F}^{\mathfrak{s}}_{-n,k}(\alpha)=0.$
\end{proposition}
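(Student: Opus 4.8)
The plan is to obtain this statement formally from Proposition~\ref{prop: cobordismmap1} (its $\tau$-analogue), Definition~\ref{def:tau*}, and the nondegeneracy of the pairing $\langle-,-\rangle$, exactly mirroring Hedden's passage from \cite[Proposition 24]{h} to \cite[Proposition 26]{h}. The bridge I would use is the elementary comparison that whenever $\alpha\in\widehat{HF}(Y,\mathfrak{s})$ satisfies $\langle[y],\alpha\rangle\ne 0$, the class $\alpha$ is itself one of the witnesses appearing in the minimum defining $\tau^{\ast}_{[y]}(Y,K,F)$, so that $\tau^{\ast}_{[y]}(Y,K,F)\le\tau_{\alpha}(Y,K,F)$. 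This uses only that the filtration is increasing, hence $\mathrm{Im}(I_m)$ is increasing in $m$, so that $\alpha\in\mathrm{Im}(I_m)$ holds precisely when $k_{\mathfrak{s},F}+m\ge\tau_{\alpha}(Y,K,F)$ (with $\tau_\alpha$ as in Definition~\ref{def:tau}). Since $\widehat{HF}(Y,\mathfrak{s})$ is a finite-dimensional vector space over $\mathbb{Z}/2\mathbb{Z}$, only finitely many classes $\alpha$ satisfy $\langle[y],\alpha\rangle\ne 0$, so a single threshold makes Proposition~\ref{prop: cobordismmap1} applicable to all of them simultaneously; this is the meaning of ``sufficiently large $n$'' here.

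For part (1) I would argue as follows. Assume $k_{\mathfrak{s},F}+k<\tau^{\ast}_{[y]}(Y,K,F)$ and let $\alpha$ be an arbitrary class with $\langle[y],\alpha\rangle\ne 0$; such an $\alpha$ is nonzero, so Proposition~\ref{prop: cobordismmap1} applies to it. The bridging inequality gives $k_{\mathfrak{s},F}+k<\tau^{\ast}_{[y]}(Y,K,F)\le\tau_{\alpha}(Y,K,F)$, whence $k_{\mathfrak{s},F}+k<\tau_{\alpha}(Y,K,F)$, and part~(1) of Proposition~\ref{prop: cobordismmap1} yields $\hat{F}^{\mathfrak{s}}_{-n,k}(\alpha)\ne 0$. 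As $\alpha$ was arbitrary among classes pairing nontrivially with $[y]$, this is exactly assertion~(1).

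For part (2) I would instead produce a single vanishing witness. Assume $k_{\mathfrak{s},F}+k>\tau^{\ast}_{[y]}(Y,K,F)$. Because $[y]\ne 0$ and the pairing is nondegenerate, the minimum in Definition~\ref{def:tau*} is attained: there is an integer $m$ with $k_{\mathfrak{s},F}+m=\tau^{\ast}_{[y]}(Y,K,F)$ and a class $\alpha\in\mathrm{Im}(I_m)$ with $\langle[y],\alpha\rangle\ne 0$. Membership $\alpha\in\mathrm{Im}(I_m)$ forces $\tau_{\alpha}(Y,K,F)\le k_{\mathfrak{s},F}+m=\tau^{\ast}_{[y]}(Y,K,F)<k_{\mathfrak{s},F}+k$, so $k_{\mathfrak{s},F}+k>\tau_{\alpha}(Y,K,F)$, and part~(2) of Proposition~\ref{prop: cobordismmap1} gives $\hat{F}^{\mathfrak{s}}_{-n,k}(\alpha)=0$. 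This $\alpha$ is the required class.

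The substantive analytic content—the large-$n$ identification of $\hat{F}^{\mathfrak{s}}_{-n,k}$ with the filtered projection via \cite[Theorem 4.2]{r}—has already been absorbed into Proposition~\ref{prop: cobordismmap1}, so I expect no serious obstacle at this stage. The only genuine care needed is bookkeeping: confirming that $\tau$ and $\tau^{\ast}$ take values in the same coset $k_{\mathfrak{s},F}+\mathbb{Z}$, that the strict inequalities survive each comparison, and that the threshold on $n$ can be chosen uniformly over the finitely many relevant $\alpha$. The one point I would be most careful about is the direction of the inequality $\tau^{\ast}_{[y]}\le\tau_{\alpha}$ and its interaction with the strict hypotheses. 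A conceptually different route would instead invoke Proposition~\ref{prop: dual}, replacing $Y$ by $-Y$ and dualizing the cobordism map with respect to $\langle-,-\rangle$; this should also work, but it introduces orientation- and framing-sign bookkeeping (e.g.\ $(-n)$-surgery on $Y$ versus $n$-surgery on $-Y$) that the direct comparison above avoids.
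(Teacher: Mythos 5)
Your proof is correct and takes essentially the same route as the paper: the paper obtains this proposition by invoking Hedden's argument for \cite[Proposition 26]{h}, which---once the analogue of \cite[Proposition 24]{h} (here Proposition~\ref{prop: cobordismmap1}) is in place---is exactly the formal deduction from Definitions~\ref{def:tau} and \ref{def:tau*} that you spell out, with all chain-level content already absorbed into Proposition~\ref{prop: cobordismmap1}. Your bridge inequality $\tau^{\ast}_{[y]}(Y,K,F)\leq \tau_{\alpha}(Y,K,F)$ for classes $\alpha$ pairing nontrivially with $[y]$, the attainment of the minimum in part (2), and the finiteness argument for a uniform threshold on $n$ are precisely the bookkeeping that deduction requires.
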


\section{Rationally null-homologous Legendrian knots} \label{sec: legendrian}

Given a rationally null-homologous oriented Legendrian knot $L$ in a contact 3-manifold $(Y, \xi)$.  Suppose that its order is $q$, and it has a rational Seifert surface $j:F\rightarrow Y$. The \emph{rational Thurston-Bennequin invariant} of $L$,  $tb_{\mathbb{Q}}(L)$, is defined to be $\frac{1}{q} L'\cdot j(F)$, where $L'$ is a copy of $L$ obtained by pushing off using the framing coming from $\xi$, and $\cdot$ denotes the algebraic intersection number.  We fix a trivialization $F\times \mathbb{R}^{2}$  of the pullback bundle $j^{\ast}\xi$ on $F$. The restriction of $\xi$ on L is $\xi|_{L}=L\times \mathbb{R}^{2}$ and has a section $TL$. The pullback $j^{\ast}(TL)$ is a section of  $\partial F\times \mathbb{R}^{2}$. The \emph{rational rotation number} of $L$,  $rot_{\mathbb{Q}}(L)$, is defined to be the winding number of $j^{\ast}(TL)$ in $\partial F\times\mathbb{R}^{2}$ divided by $q$, i.e., $\frac{1}{q}\text{winding}(j^{\ast}TL, \mathbb{R}^{2})$. We refer the reader to \cite{be} for more details.

\begin{lemma} \label{lemma: stab} \cite[Lemma 1.3]{be} Suppose the positive/negative stabilization of $L$ is $S_{\pm}(L)$.  Then we have
$$tb_{\mathbb{Q}}(S_{\pm}(L))=tb_{\mathbb{Q}}(L)-1,$$
$$rot_{\mathbb{Q}}(S_{\pm}(L), F)=rot_{\mathbb{Q}}(L, F)\pm1.$$
\end{lemma}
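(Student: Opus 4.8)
The plan is to prove both formulas by exploiting that stabilization $S_{\pm}$ is a \emph{local} modification: it is supported in a Darboux ball $B$ in which $(Y,\xi)$ is contactomorphic to a standard contact $\R^3$ and $L\cap B$ is a standard Legendrian arc. Outside $B$ the knots $L$ and $S_{\pm}(L)$ coincide, so I would take a rational Seifert surface for $S_{\pm}(L)$ that agrees with $j\colon F\to Y$ away from $B$ and differs only by the standard local modification inside $B$. Both $tb_{\mathbb{Q}}$ and $rot_{\mathbb{Q}}$ are read off \emph{along} $L$ — the first from the algebraic intersection $L'\cdot j(F)$ of the contact push-off with the surface, the second from the winding number of the pulled-back tangent field $j^{\ast}(TL)$ — each carrying the normalizing factor $\tfrac{1}{q}$. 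The strategy is to compare these quantities for $L$ and $S_{\pm}(L)$ and to show that the entire difference is concentrated in $B$.

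The crucial bookkeeping point is the covering multiplicity. Since $j|_{\partial F}\colon\partial F\to K$ is a $q$-fold cover, the preimage of the short arc $L\cap B$ consists of arcs of total multiplicity $q$, so a modification ``worth $\pm1$'' in the integral ($q=1$) sense is seen $q$ times after pulling back to $\partial F$. For the rotation number, the standard zigzag changes the winding of the tangent direction of a Legendrian arc by $\pm1$ relative to the contact trivialization; pulled back along the $q$ arcs of $\partial F$ lying over $L\cap B$, this contributes $\pm q$ to $\mathrm{winding}(j^{\ast}(TL),\R^2)$, and dividing by $q$ gives $rot_{\mathbb{Q}}(S_{\pm}(L),F)=rot_{\mathbb{Q}}(L,F)\pm1$. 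For the Thurston--Bennequin invariant, the stabilization subtracts one full twist from the contact framing relative to the product framing of $B$; since a meridian of $K$ meets $j(F)$ in $q$ points, this extra meridional turn of the push-off $L'$ changes $L'\cdot j(F)$ by $-q$, and hence $tb_{\mathbb{Q}}$ by $-1$.

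To make this rigorous I would first fix the local model together with a trivialization of $j^{\ast}\xi$ over $B$ that restricts to the standard contact trivialization, so that the winding numbers for $L$ and $S_{\pm}(L)$ are measured consistently and cancel outside $B$. I would then verify the two local counts in the standard ball and, most importantly, confirm the multiplicity-$q$ accounting: that the $q$ boundary arcs of $F$ over $L\cap B$ each register the full local change and that the local surgery on the surface introduces no additional global crossings or framing corrections. This bookkeeping — checking that the factor $q$ produced by the $q$-fold cover exactly cancels the normalizing $\tfrac{1}{q}$ — is the one genuinely delicate step; the rest reduces to the familiar integral stabilization computations carried out inside $B$, as in \cite[Lemma 1.3]{be}.
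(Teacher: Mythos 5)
Your proposal is correct, but note that the paper never proves this lemma at all: it is imported verbatim from Baker--Etnyre \cite[Lemma 1.3]{be}, so there is no in-paper argument to compare against, and what you have written is a reconstruction of the cited source's argument rather than an alternative to anything in this paper. Your reconstruction is sound, and you correctly isolate the one genuinely delicate point, namely that both $tb_{\mathbb{Q}}$ and $rot_{\mathbb{Q}}$ carry a $\tfrac{1}{q}$ normalization while $j|_{\partial F}\colon \partial F\to K$ is a $q$-fold cover, so each local change is registered $q$ times and the two factors cancel. Both of your local counts check out: a meridian $\mu$ of $K$ meets $j(F)$ algebraically in $q$ points (since $F\cap\partial N(K)$ consists of $c$ curves of class $t[\lambda_{can}]+r[\mu]$ with $ct=q$, each meeting $\mu$ in $t$ points), so replacing the contact framing by the stabilized one, which differs by one negative meridional twist, changes $L'\cdot j(F)$ by $-q$ and hence $tb_{\mathbb{Q}}$ by $-1$; and the zigzag's $\pm 1$ change in the tangent winding lifts to each of the $q$ boundary arcs of $\partial F$ lying over $L\cap B$, changing the total winding by $\pm q$ and hence $rot_{\mathbb{Q}}$ by $\pm 1$. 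The point you flag as needing verification — consistency of trivializations — is easily discharged and worth stating: $j^{-1}(B)$ is a union of $q$ half-disks (exactly as in the paper's proof of Proposition~\ref{prop: add}), each simply connected, so trivializations of $j^{\ast}\xi$ over the surfaces for $L$ and $S_{\pm}(L)$ can be chosen to agree outside these half-disks and, up to homotopy, on them; this is what legitimizes concentrating the entire comparison inside $B$. Incidentally, your meridian-intersection computation for $tb_{\mathbb{Q}}$ is essentially the same calculation the paper carries out later in Lemma~\ref{lemma: tb}.
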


For $i=1,2$, suppose that $L_{i}$ is a  Legendrian knot in a contact 3-manifold $(Y_{i}, \xi_{i})$.  One can construct their connected sum, $L_{1}\sharp L_{2}$, in the contact 3-manifold $(Y_{1}\sharp Y_{2},  \xi_{1}\sharp\xi_{2})$ \cite{eh}. The following proposition generalizes \cite[Lemma 3.3]{eh}.

\begin{proposition} \label{prop: add}
For $i=1,2$, suppose that $L_{i}$ is a rationally null-homologous Legendrian knot in a contact 3-manifold $(Y_{i}, \xi_{i})$.  Then the rational Thurston-Bennequin invariant and the rational rotation number of the Legendrian knot $L_{1}\sharp L_{2}$ in the contact 3-manifold $(Y_{1}\sharp Y_{2},  \xi_{1}\sharp\xi_{2})$  satisfy
$$tb_{\mathbb{Q}}(L_{1}\sharp L_{2})=tb_{\mathbb{Q}}(L_{1})+tb_{\mathbb{Q}}(L_{2})+1,$$
$$rot_{\mathbb{Q}}(L_{1}\sharp L_{2}, F_{1}\natural F_{2})=rot_{\mathbb{Q}}(L_{1}, F_{1})+rot_{\mathbb{Q}}(L_{2}, F_{2}).$$
\end{proposition}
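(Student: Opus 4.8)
The plan is to perform the contact connected sum inside Darboux balls and to decompose each of the two defining quantities---the algebraic intersection number $(L_1\sharp L_2)'\cdot j(F_1\natural F_2)$ that computes $tb_{\mathbb{Q}}$, and the winding number of $j^{\ast}(T(L_1\sharp L_2))$ that computes $rot_{\mathbb{Q}}$---into a \emph{bulk} contribution coming from the copies of $F_1$ and $F_2$ and a \emph{local} contribution coming from the connect-sum region. Write $q=\mathrm{lcm}(q_1,q_2)$ and $m_i=q/q_i$, so that $j\colon F_1\natural F_2\to Y_1\sharp Y_2$ is assembled from $m_1$ copies of $F_1$ and $m_2$ copies of $F_2$. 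First I would fix connect-sum points $p_i\in L_i$; since $\partial F_i\to L_i$ is a $q_i$-fold cover, near $p_i$ the surface $F_i$ presents $q_i$ parallel boundary sheets running along $L_i$, so after taking $m_i$ copies there are $m_iq_i=q$ sheets approaching the gluing sphere from each side. The surface $F_1\natural F_2$ is then obtained by attaching $q$ connecting bands across the gluing sphere so that $\partial(F_1\natural F_2)$ is the $q$-fold cover of $L_1\sharp L_2$.

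For the rotation number I would choose a trivialization of $j^{\ast}(\xi_1\sharp\xi_2)$ over $F_1\natural F_2$ that restricts, over each copy, to the fixed trivializations of $j^{\ast}\xi_i$ over $F_i$; this is possible because the connected sum is performed in balls where the contact planes are standard, so the trivializations extend across the bands with no relative twisting. The winding number of $j^{\ast}(T(L_1\sharp L_2))$ then splits as $m_1$ times the winding over $F_1$, plus $m_2$ times the winding over $F_2$, plus the winding over the bands. By the definition of $rot_{\mathbb{Q}}$ the first two terms total $q\,rot_{\mathbb{Q}}(L_1,F_1)+q\,rot_{\mathbb{Q}}(L_2,F_2)$, and in the standard local model the tangent field of $L_1\sharp L_2$ can be arranged not to wind relative to the chosen trivialization across the bands, exactly as in the additivity of the rotation number for integral connected sums \cite{eh}. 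Dividing by $q$ yields the claimed additivity of $rot_{\mathbb{Q}}$.

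For the Thurston--Bennequin invariant the bulk contribution is handled identically: the contact pushoff $(L_1\sharp L_2)'$ meets the $m_i$ copies of $F_i$ away from the bands in $m_i\,(L_i'\cdot j(F_i))=m_iq_i\,tb_{\mathbb{Q}}(L_i)=q\,tb_{\mathbb{Q}}(L_i)$ points. The remaining input is the local contribution at the connect-sum region, and here I would reproduce the Etnyre--Honda front computation inside the Darboux model, where joining the two Legendrian arcs cancels a pair of cusps and shifts the contact framing by exactly one full positive twist relative to the rational Seifert framing of $L_1\sharp L_2$. The essential point is that, because $\partial(F_1\natural F_2)$ runs in $q$ parallel sheets through the connect-sum region, this single framing twist forces $(L_1\sharp L_2)'$ to cross the $q$ bands a total of $q$ times; hence the local contribution to $(L_1\sharp L_2)'\cdot j(F_1\natural F_2)$ is $+q$. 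Adding the three contributions and dividing by $q$ gives $tb_{\mathbb{Q}}(L_1\sharp L_2)=tb_{\mathbb{Q}}(L_1)+tb_{\mathbb{Q}}(L_2)+1$.

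The main obstacle I anticipate is precisely this local analysis at the connect-sum region: one must verify that the $q$-sheeted boundary of $F_1\natural F_2$ interacts with the contact framing so as to convert the integral $+1$ twist into a net intersection of $+q$ (and not, say, $+1$ or $+m_i$), and simultaneously that the band region contributes no net winding to the rotation number. Making this rigorous requires an explicit local model of the contact connected sum together with careful bookkeeping of how the $m_i$ copies of each $\partial F_i$ are cyclically joined into the $q$-fold cover of $L_1\sharp L_2$; once the local model is pinned down, both computations reduce to the integral case of \cite{eh} applied sheet by sheet.
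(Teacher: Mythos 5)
Your global architecture matches the paper's: perform the sum inside Darboux balls $B_i$, assemble $m_i$ copies of $j(F_i)$ into a rational Seifert surface for $L_1\sharp L_2$, and split both the intersection number computing $tb_{\mathbb{Q}}$ and the winding number computing $rot_{\mathbb{Q}}$ into contributions from the two sides and from the gluing region. The gap is that the entire content of the proposition sits in exactly the step you defer: the claims that the gluing region contributes $+q$ to $(L_1\sharp L_2)'\cdot j(F_1\natural F_2)$ and $0$ to the winding number. You propose to establish these by reproducing the Etnyre--Honda front computation ``sheet by sheet,'' but you do not carry this out, and you yourself flag it as the main obstacle; an appeal to the integral case of \cite{eh} applied per sheet is not a proof, since one must control how the $q$ boundary sheets and the contact framing interact in the local model. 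There is also an unstated hypothesis in your bulk count: writing the away-from-the-bands contribution as $m_1\,L_1'\cdot j(F_1)+m_2\,L_2'\cdot j(F_2)$ assumes every intersection point of $L_i'$ with $j(F_i)$ lies outside $B_i$. This can be arranged (choose $B_i$ disjoint from the finitely many intersection points), but without saying so your decomposition need not hold, since in general the balls can contain a nonzero algebraic count of such intersections.

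The paper closes precisely this step with a device that makes any local computation unnecessary, and which you could adopt: the two Legendrian arcs $L_i\cap B_i$ together form the maximal Thurston--Bennequin Legendrian unknot $U$ in $(S^3,\xi_{std})=(B_1,\xi_1|_{B_1})\cup(B_2,\xi_2|_{B_2})$, and the half-disks $j(F_i)\cap B_i$ glue into $q_1q_2$ Seifert disks $F_0$ for $U$ (the paper reduces to $q_1,q_2$ coprime, so $m_1=q_2$, $m_2=q_1$, $q=q_1q_2$). Since the pushoffs agree piecewise ($L'$ with $L_i'$ outside the balls, $U'$ with $L_i'$ inside), one gets the exact bookkeeping identity
$$L'\cdot j(F_1\natural F_2)+q_1q_2\,U'\cdot F_0=q_2\,L_1'\cdot j(F_1)+q_1\,L_2'\cdot j(F_2),$$
together with its analogue for winding numbers, and then the known values $U'\cdot F_0=-1$ and $\mathrm{winding}(j^{\ast}TU,\mathbb{R}^2)=0$ immediately yield both formulas. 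In other words, the ``$+1$'' is not extracted from a local twist count at all; it is the Thurston--Bennequin invariant of the standard unknot, entering with multiplicity $q_1q_2$. Until you either make your sheet-by-sheet local analysis rigorous or substitute an argument of this kind, the proof is incomplete.
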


\begin{proof}
We denote $L_{1}\sharp L_{2}$ by $L$.  For $i=1,2$, let $p_{i}\in L_{i}$ be a point.  Suppose $(B_{i}, \xi_{i}|_{B_{i}})$ is a Darboux ball centered at $p_{i}$. That is, $B_{i}$ has coordinates $(x,y,z)$ about $p_{i}$ so that $\xi_{i}|_{B_{i}}$ is given by the one-form $dz+xdy$. Moreover, $L_{i}\cap B_{i}$ can be identified with the $y$-axis.

Since $(B_{i}, \xi_{i}|_{B_{i}})$ is a Darboux ball for $i=1,2$,  $(B_{1}, \xi_{1}|_{B_{1}})\cup (B_{2}, \xi_{2}|_{B_{2}})=(S^{3}, \xi_{std})$. Moreover, $(L_{1}\cap B_{1})\cup (L_{2}\cap B_{2})$ is a Legendrian unknot in  $(S^{3}, \xi_{std})$ with maximal Thurston-Bennequin invariant $-1$. We denote it by $U$. Its Seifert surface is a disk, we denote it by $F_{0}$.

For $i=1,2$, suppose $[L_{i}]$ is of order $q_{i}$, and $j:F_{i}\rightarrow Y_{i}$ is a rational Seifert surface of $K_{i}$,  then $j(F_{i})\cap B_{i}$ is a union of $q_{i}$ half disks with common diameter given by  $L_{i}\cap B_{i}$. For simplicity of presentation and without loss of generality, we assume that $q_1$ and $q_2$ are coprime. We choose $q_2$ copies of $j(F_{1})$ in $Y_{1}$ and  $q_1$ copies of $j(F_{2})$ in $Y_{2}$, and identify their boundaries to $L_{1}$ and $L_{2}$, respectively. We denote them by $q_{2}j(F_{1})$ and $q_{1}j(F_{2})$.  Gluing $q_{2}j(F_{1})\cap B_{1}$ and $q_{1}j(F_{2})\cap B_{2}$ along the $q_{1}q_{2}$ semi-circles  which lie in $\partial B_{1}$ and $\partial B_{2}$ respectively,  we obtain a union of $q_{1}q_{2}$ disks with common boundary $U$.   Gluing $q_{2}j(F_{1})\setminus \text{int}(B_{1})$ and $q_{1}j(F_{2})\setminus \text{int}(B_{2})$ along the $q_{1}q_{2}$ semi-circles, we obtain the image of a rational Seifert surface for $L$. We denote it by $j:F_{1}\natural F_{2}\rightarrow Y_{1}\sharp Y_{2}$.

Let $L', L'_{1}, L'_{2}$ and $U'$ be the contact push-offs of $L, L_{1}, L_{2}$ and $U$ respectively. Then we can assume that $L'\cap (Y_{1}\setminus \text{int}(B_{1}))$ coincides with $L'_{1}\setminus \text{int}(B_{1})$, $L'\cap (Y_{2}\setminus \text{int}(B_{2}))$ coincides with $L'_{2}\setminus \text{int}(B_{2})$, $U'\cap B_{1}$ coincides with $L'_{1}\cap B_{1}$, and $U'\cap B_{2}$ coincides with $L'_{2}\cap B_{2}$. So we have $$L'\cdot j(F_{1}\natural F_{2})+q_{1}q_{2} U'\cdot F_{0}=q_{2} L'_{1}\cdot j(F_{1})+q_{1} L'_{2}\cdot j(F_{2}).$$ Obviously,   $U'\cdot F_{0}=-1$. 
Hence $$tb_{\mathbb{Q}}(L)=\frac{1}{q_{1}q_{2}} L'\cdot j(F_{1}\natural F_{2}) =\frac{1}{q_{1}}L'_{1}\cdot j(F_{1})+\frac{1}{q_{2}}L'_{2} \cdot j(F_{2})+1=tb_{\mathbb{Q}}(L_{1})+tb_{\mathbb{Q}}(L_{2})+1.$$

To prove the second equality of the proposition, we choose a trivialization of  $j^{\ast}(\xi_{i})$ over $F_{i}$ for $i=1,2$; this induces a trivialization of $j^{\ast}(\xi_{1}\sharp\xi_{2})$  over $F_{1}\natural F_{2}$, and a trivialization of $j^{\ast}(\xi_{std})$  over $F_{0}$. These trivializations induce  a trivialization of $j^{\ast}(\xi_{i})$ over $\partial F_{i}$ for $i=1,2$, a trivialization of $j^{\ast}(\xi_{1}\sharp\xi_{2})$ over $\partial(F_{1}\natural F_{2})$, and a trivialization of $ \xi_{std}$ over $\partial F_{0}$. We denote them by $\partial F_{i}\times \mathbb{R}^{2}$ for $i=1,2$, $\partial (F_{1}\natural F_{2})\times \mathbb{R}^{2}$, and $\partial F_{0}\times \mathbb{R}^{2}$, respectively. 

Observe that $$\text{winding}(j^{\ast}TL, \mathbb{R}^{2})+q_{1}q_{2}\text{winding}(j^{\ast}TU, \mathbb{R}^{2})=q_{2}\text{winding}(j^{\ast}TL_{1}, \mathbb{R}^{2})+q_{1}\text{winding}(j^{\ast}TL_{2}, \mathbb{R}^{2}).$$ 
Indeed, both the left and the right sides of this equation equal $\frac{1}{2\pi}$ times the sum of the angles induced from the four Legendrian arcs $L_{1}\cap B_{1}$, $L_{2}\cap B_{2}$,  $L_{1}\setminus \text{int}(B_{1})$ and $L_{2}\setminus \text{int}(B_{2})$. For example, the Legendrian arc $L_{1}\cap B_{1}$ lift to $q_{1}q_{2}$ arcs in $\partial(F_{1}\natural F_{2})$ and  $q_{1}$ arcs in $\partial F_{1}$.  With respect to the chosen trivializations, the winding angles along the lifted arcs on both sides of the equation are the same.  

By definition, we have $$\text{winding}(j^{\ast}TU, \mathbb{R}^{2})=0,$$ $$\text{winding}(j^{\ast}TL_{1}, \mathbb{R}^{2})=q_{1}\cdot rot_{\mathbb{Q}}(L_{1}, F_{1}),$$  $$\text{winding}(j^{\ast}TL_{2}, \mathbb{R}^{2})=q_{2}\cdot rot_{\mathbb{Q}}(L_{2}, F_{2}).$$  Hence, $$rot_{\mathbb{Q}}(L, F_{1}\natural F_{2})=\frac{1}{q_{1}q_{2}}\text{winding}(j^{\ast}TL, \mathbb{R}^{2})=rot_{\mathbb{Q}}(L_{1}, F_{1})+rot_{\mathbb{Q}}(L_{2}, F_{2}).$$ \end{proof}

\section{A bound for rational Thurston-Bennequin invariants} \label{sec: bound}

Suppose $K$ is a rationally null-homologous knot in a 3-manifold $Y$; $\xi$ is a contact structure on $Y$; $L$ is a Legendrian representative of $K$ of order $q$ in $(Y,\xi)$; $F$ is a rational Seifert surface for $K$.  Using Lemma~\ref{lemma: stab}, we can perform sufficiently many times of positive stabilizations so that the contact framing of $L$ is $\lambda_{1}=\lambda_{can}+(-n+1)\mu$ without altering the number $tb_{\mathbb{Q}}(L)+ rot_{\mathbb{Q}}(L, F)$.  Performing Legendrian surgery along $L$, we obtain a contact structure $\xi_{L}$ on a 3-manifold $Y_{-n}(K)$. This Legendrian surgery induces a Stein cobordism $(W, J)$ whose concave end is  $(Y,\xi)$, and whose convex end is $(Y_{-n}(K), \xi_{L})$. Moreover, by \cite[Theorem 4.2]{r}, we have
$$\langle c_{1}(J),[\tilde{F}\cup qC]\rangle-nq-cr=2q(k_{\mathfrak{s}_{\xi}, F}+k),$$
for some integer $k$, where $\mathfrak{s}_{\xi}$ is the Spin$^{c}$ structure represented by $\xi$.

\begin{lemma}\label{lemma: rot}
$\langle c_{1}(J), [\tilde{F}\cup qC]\rangle=q\cdot rot_{\mathbb{Q}}(L, F)$.
\end{lemma}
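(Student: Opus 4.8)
The plan is to read off the pairing $\langle c_{1}(J), [\tilde{F}\cup qC]\rangle$ as the relative Euler number of the complex line bundle $\det_{\mathbb{C}}(TW,J)$ over the closed surface $\tilde{F}\cup qC$, and then to identify that Euler number with the winding number $\text{winding}(j^{\ast}TL, \mathbb{R}^{2})$ that defines $rot_{\mathbb{Q}}(L,F)$. The guiding principle is that only the rotational part of the contact data feeds into $c_{1}(J)$: the framing data, namely $-n$ and the correction $cr$, determine the self-intersection $[\tilde{F}\cup qC]^{2}$ and enter the grading formula $\langle c_{1}(\mathfrak{t}_{k}), [\tilde{F}\cup qC]\rangle-nq-cr=2q(k_{\mathfrak{s}_{\xi}, F}+k)$, but contribute nothing to the first Chern number itself.

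First I would record the behavior along the concave end: the complex tangent bundle restricts as $(TW,J)|_{Y}\cong \xi\oplus\mathbb{C}$ with a trivial complex line summand, so that $c_{1}(J)|_{Y}=c_{1}(\xi)$ and any trivialization of $\xi$ over $j(F)$ induces a trivialization of $\det_{\mathbb{C}}(TW,J)$ over the part of $\tilde{F}$ lying in the collar $Y\times[0,1]$. I would then invoke the fixed trivialization $F\times\mathbb{R}^{2}$ of $j^{\ast}\xi$: the section of $\det_{\mathbb{C}}(TW,J)$ it determines is nowhere zero over $\tilde{F}$, so the entire contribution to $\langle c_{1}(J), [\tilde{F}\cup qC]\rangle$ localizes onto the $q$ core disks of the Weinstein $2$-handle.

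Next I would pass to the standard Weinstein handle model. Over the core disks, $\det_{\mathbb{C}}(TW,J)$ carries a canonical nowhere-zero section $\sigma$ coming from the complex trivialization of the handle, and along the attaching region the Legendrian tangent $TL$ is constant (winding zero) relative to $\sigma$. Consequently, the algebraic count of zeros of a section that equals the $F\times\mathbb{R}^{2}$-constant section over $\tilde{F}$ and equals $\sigma$ over $qC$ is exactly the winding of $j^{\ast}TL$ measured against the trivialization $F\times\mathbb{R}^{2}$, accumulated over the $q$ sheets of the cover $\partial\tilde{F}\to L$. By the definition of the rational rotation number this count equals $\text{winding}(j^{\ast}TL, \mathbb{R}^{2})=q\cdot rot_{\mathbb{Q}}(L,F)$, which is the asserted identity; this is the rational analogue of Gompf's Chern class computation for the integrally null-homologous case.

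The step I expect to be the main obstacle is the multi-strand bookkeeping in the rational setting. Since $\partial\tilde{F}$ consists of $c$ curves of slope $t\lambda_{can}+r\mu$ rather than honest longitudes, one must verify carefully that the interpolation over the collar of $\partial\tilde{F}$ picks up the full winding of $j^{\ast}TL$ over the $q$-fold cover and nothing more, and in particular that the $-n$ framing and the term $cr$ leave the first Chern number untouched. Once the localization of the second paragraph is justified, the remainder is a matter of comparing the Legendrian (contact) framing with the trivialization $F\times\mathbb{R}^{2}$ and isolating the rotational part, carried out sheet-by-sheet over $\partial\tilde{F}\to L$.
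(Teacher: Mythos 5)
Your proposal is correct and follows essentially the same route as the paper: the paper likewise splits $(TW,J)$ along the boundary as $\xi\oplus\mathbb{C}$ (exhibiting the trivial summand via the symplectization, spanned by the Reeb field and $\partial_t$) and then identifies $\langle c_{1}(J),[\tilde{F}\cup qC]\rangle$ with the winding of $j^{\ast}(TL)$ relative to the trivialization $\partial F\times\mathbb{R}^{2}$, citing Gompf's obstruction argument \cite[Proposition 2.3]{g} for the step you carry out by localizing the relative Euler number on the handle cores. The only difference is presentational: you unpack Gompf's argument explicitly where the paper invokes it by reference.
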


\begin{proof}
Suppose $\xi$ is the kernel of a contact form $\alpha$ on $Y$, and $R$ is the Reeb vector field. Consider the symplectization of $(Y,\xi)$, $(Y\times [0,1], \omega=d(e^{t}\alpha))$. The restriction of the almost complex structure $J$ on $Y\times [0,1]$  is compatible with $\omega$. Moreover, $J(\xi)=\xi$, $J(R)=\partial_{t}$,  and $J(\partial_{t})=R$.  The  complex line bundle spanned by $R$ and $\partial_{t}$ can be extended to a trivial one on $W$.

By the same argument as in \cite[Proposition 2.3]{g}, the obstruction to extending a trivialization of the complex line bundle $\xi$ on $Y\times[0,1]$ to $W$ is the winding number of $j^{\ast}(TL)$ with respect to the trivialization  $\partial F\times \mathbb{R}^{2}$ induced by a trivialization of the pullback bundle $i^{\ast}\xi$ on $F$. By definition, this winding number is $q\cdot rot_{\mathbb{Q}}(L)$. Recall that $\tilde{F}$ is in fact diffeomorphic to $F$.  So $\langle c_{1}(J), [\tilde{F}\cup qC]\rangle=q\cdot rot_{\mathbb{Q}}(L)$.
\end{proof}

\begin{lemma}\label{lemma: tb}
$-nq-cr=q\cdot (tb_{\mathbb{Q}}(L)-1)$.
\end{lemma}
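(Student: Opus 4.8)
The plan is to compute the algebraic intersection number $L' \cdot j(F)$ directly from homology classes on the boundary torus $T = \partial N(K)$, since by definition $tb_{\mathbb{Q}}(L) = \frac{1}{q} L' \cdot j(F)$, where $L'$ is the contact push-off of $L$. Once the contact framing has been arranged to be $\lambda_1 = \lambda_{can} + (-n+1)\mu$, I would isotope $L'$ (within $Y \setminus K$, hence without changing its intersection number with $j(F)$) onto $T$, realizing $[L'] = [\lambda_{can}] + (-n+1)[\mu]$ in $H_1(T;\mathbb{Z})$. On the other side, $\tilde F$ meets $T$ in $c$ parallel copies of $[\nu] = t[\lambda_{can}] + r[\mu]$, so that $[\partial \tilde F] = c[\nu] = q[\lambda_{can}] + cr[\mu]$, using $ct = q$.

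The first step is to reduce the three-dimensional intersection $L' \cdot j(F)$ to a two-dimensional intersection on $T$. I would push $L'$ off $T$ slightly into the complement $Y \setminus \text{int}(N(K))$; then $L'$ is disjoint from the portion of $j(F)$ lying inside $N(K)$, so every intersection of $L'$ with $j(F)$ occurs with $\tilde F$. Using a collar $T \times [0,1]$ of $T$ in $Y \setminus \text{int}(N(K))$, in which $\tilde F$ is a product $(\partial \tilde F) \times [0,1]$ and $L'$ sits at an interior level, the intersections of $L'$ with $\tilde F$ are exactly the transverse intersections of the $1$-cycles $[L']$ and $[\partial\tilde F]$ on $T$. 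Hence $L' \cdot j(F) = [L'] \cdot [\partial \tilde F]$, where $T$ is oriented so that $[\mu] \cdot [\lambda_{can}] = 1$.

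The second step is the bilinear computation on $T$. With $[\mu] \cdot [\lambda_{can}] = 1$ and $[\lambda_{can}]\cdot[\lambda_{can}] = [\mu]\cdot[\mu] = 0$, I obtain
$$[L'] \cdot [\partial\tilde F] = \big([\lambda_{can}] + (-n+1)[\mu]\big)\cdot\big(q[\lambda_{can}] + cr[\mu]\big) = (-n+1)q - cr.$$
Therefore $q\cdot tb_{\mathbb{Q}}(L) = L' \cdot j(F) = (-n+1)q - cr$, and subtracting $q$ from both sides gives $q\big(tb_{\mathbb{Q}}(L) - 1\big) = -nq - cr$, as claimed. As a sanity check, in the integrally null-homologous case $q = c = t = 1$, $r = 0$ this reads $tb_{\mathbb{Q}}(L) = -n+1$, the difference between the contact framing $\lambda_{can} + (-n+1)\mu$ and the Seifert framing $\lambda_{can}$.

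I expect the main obstacle to be the sign bookkeeping in the reduction of the first step: one must orient $T$, $L'$, $\partial\tilde F$, and $j(F)$ consistently (in particular compatibly with the orientation of $F$ used to define $rot_{\mathbb{Q}}$ and the generator $[\tilde F \cup qC]$ appearing in Lemma~\ref{lemma: rot}) so that the sign in $L' \cdot j(F) = [L'] \cdot [\partial\tilde F]$ is $+1$. The product-collar argument itself is routine once the contact push-off has been placed on $T$, but verifying that pushing $L'$ \emph{outward} leaves no residual intersections with the interior of $\tilde F$ or with the part of $j(F)$ inside $N(K)$ is where care is needed.
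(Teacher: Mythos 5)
Your proposal is correct and follows essentially the same route as the paper: both reduce $q\cdot tb_{\mathbb{Q}}(L)$ to the homological intersection on $\partial N(K)$ of the contact-framing class $[\lambda_{can}]+(-n+1)[\mu]$ with $[\partial\tilde F]=q[\lambda_{can}]+cr[\mu]$, and the bilinear computation is identical. The only difference is that the paper shortcuts your collar/isotopy reduction by citing Baker--Etnyre's identity $tb_{\mathbb{Q}}(L)=lk_{\mathbb{Q}}(K,\lambda_{1})=\frac{1}{q}\,j(F)\cdot\lambda_{1}$, whereas you justify that step directly from the definition.
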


\begin{proof}
Recall that the contact framing of the Legendrian knot $L$ is $\lambda_{1}=\lambda_{can}+(-n+1)\mu.$  So by \cite[Page 23]{be},
\begin{align*}
tb_{\mathbb{Q}}(L)-1&=lk_{\mathbb{Q}}(K, \lambda_{1})-1=\frac{1}{q}  j(F)\cdot \lambda_{1}-1\\
&=\frac{1}{q} \cdot (q[\lambda_{can}]+cr[\mu])\cdot ([\lambda_{can}]+(-n+1)[\mu])-1\\
&=\frac{1}{q}(-nq-cr),
\end{align*}
The rational linking number, $lk_{\mathbb{Q}}(K, \lambda_{1})$, is defined in \cite[Page 21]{be}.
\end{proof}

Combining Lemma~\ref{lemma: rot} and Lemma~\ref{lemma: tb}, we get

\begin{lemma}\label{lemma:parity}
$tb_{\mathbb{Q}}(L)+ rot_{\mathbb{Q}}(L, F)=2(k_{\mathfrak{s}_{\xi}, F}+k)+1$.
\end{lemma}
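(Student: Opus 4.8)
The plan is to combine the two preceding lemmas with the framing equation supplied by Rasmussen's computation, and the whole argument is essentially a substitution. Recall that the setup for this section produces, via \cite[Theorem 4.2]{r}, the identity
$$\langle c_{1}(J),[\tilde{F}\cup qC]\rangle-nq-cr=2q(k_{\mathfrak{s}_{\xi}, F}+k)$$
for some integer $k$, where the left-hand side is the relative first Chern number of the Stein cobordism $(W,J)$ paired against the generator $[\tilde{F}\cup qC]$ of $H_{2}(W)$, corrected by the framing term $-nq-cr$. My strategy is to rewrite each of the two terms on the left in terms of the rational Legendrian invariants and then divide through by $q$.

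First I would invoke Lemma~\ref{lemma: rot} to replace $\langle c_{1}(J), [\tilde{F}\cup qC]\rangle$ by $q\cdot rot_{\mathbb{Q}}(L, F)$; this is the genuinely geometric input, identifying the Chern-number obstruction with the winding number of $j^{\ast}TL$. Next I would invoke Lemma~\ref{lemma: tb} to replace the framing term $-nq-cr$ by $q\cdot(tb_{\mathbb{Q}}(L)-1)$; this is the rational linking-number computation that uses the specific stabilized framing $\lambda_{1}=\lambda_{can}+(-n+1)\mu$. Substituting both expressions into the displayed identity gives
$$q\cdot rot_{\mathbb{Q}}(L, F)+q\cdot(tb_{\mathbb{Q}}(L)-1)=2q(k_{\mathfrak{s}_{\xi}, F}+k).$$

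It then only remains to divide both sides by the order $q>0$ and collect terms, yielding $tb_{\mathbb{Q}}(L)+rot_{\mathbb{Q}}(L,F)=2(k_{\mathfrak{s}_{\xi}, F}+k)+1$, which is the claim. I do not expect any real obstacle at this final step: the substantive work is carried entirely by Lemmas~\ref{lemma: rot} and~\ref{lemma: tb}, and once those are in hand the present lemma is a one-line algebraic consolidation. The only point worth checking carefully is bookkeeping consistency — that the same integer $k$ and the same Spin$^{c}$ structure $\mathfrak{s}_{\xi}$ appear in all three inputs, so that dividing by $q$ is legitimate and the additive constant comes out as exactly $+1$. This clean $+1$ is precisely what will later feed the $2\tau^{\ast}_{c(\xi)}(Y,K,F)-1$ bound in Theorem~\ref{thm: main}, so it is the feature of the statement I would be most careful to track.
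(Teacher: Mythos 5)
Your proposal is correct and is exactly the paper's argument: the paper states Lemma~\ref{lemma:parity} as an immediate consequence of ``combining Lemma~\ref{lemma: rot} and Lemma~\ref{lemma: tb}'' with the identity $\langle c_{1}(J),[\tilde{F}\cup qC]\rangle-nq-cr=2q(k_{\mathfrak{s}_{\xi}, F}+k)$ from \cite[Theorem 4.2]{r}, which is precisely your substitution followed by division by $q$. Your bookkeeping check (same $k$, same $\mathfrak{s}_{\xi}$, and the constant coming out as $+1$) matches the paper's implicit computation, so there is nothing to add.
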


\begin{proof}[Proof of Theorem~\ref{thm: main}] We proceed by a similar argument as in the proofs of \cite[Theorem 1]{p1} and \cite[Theorem 2]{h}.

The first step is to show that
\begin{equation}\label{plus1}
tb_{\mathbb{Q}}(L)+rot_{\mathbb{Q}}(L, F)\leq 2\tau^{\ast}_{c(\xi)}(Y, K, F)+1.
\end{equation}
Suppose $c(\xi_{L})\in \widehat{HF}(-Y_{-n}(K), \mathfrak{s}_{\xi_{L}})$ is the Ozsv\'{a}th-Szab\'{o} contact invariant. Let $\hat{F}_{W}: \widehat{HF}(Y, \mathfrak{s}_{\xi}) \rightarrow  \widehat{HF}(Y_{-n}(K), \mathfrak{s}_{\xi_{L}})$  and $\hat{F}_{\overline{W}}: \widehat{HF}(-Y_{-n}(K), \mathfrak{s}_{\xi_{L}})\rightarrow \widehat{HF}(-Y, \mathfrak{s}_{\xi})$  be the homomorphisms induced by the cobordisms. We have $\hat{F}_{\overline{W}}(c(\xi_{L})=c(\xi)$.  Let $\alpha$ be a homology class in $\widehat{HF}(Y, \mathfrak{s}_{\xi})$ that pairs nontrivially with $c(\xi) \in \widehat{HF}(-Y, \mathfrak{s}_{\xi})$, then   $$0\neq\langle c(\xi), \alpha\rangle=\langle\hat{F}_{\overline{W}}(c(\xi_{L})),\alpha\rangle=\langle c(\xi_{L}), \hat{F}_{W}(\alpha)\rangle.$$ So $\hat{F}_{W}(\alpha)\neq0$. By Proposition~\ref{prop: cobordismmap2}, $k_{\mathfrak{s}_{\xi}, F}+k\leq \tau^{\ast}_{c(\xi)}(Y, K, F)$.   Inequality (\ref{plus1}) then follows from Lemma~\ref{lemma:parity}.

Next we prove that
\begin{equation}\label{plus0}
tb_{\mathbb{Q}}(L)+rot_{\mathbb{Q}}(L, F)\leq 2\tau^{\ast}_{c(\xi)}(Y, K, F).
\end{equation}
We apply (\ref{plus1}) on the Legendrian connected sum of two copies of $L$, i.e., the Legendrian knot $L\sharp L\in (Y\sharp Y, \xi\sharp \xi)$: $$tb_{\mathbb{Q}}(L\sharp L)+rot_{\mathbb{Q}}(L\sharp L, F\natural F) \leq 2\tau^{\ast}_{c(\xi)\otimes c(\xi)}(Y\sharp Y, K\sharp K, F\natural F)+1.$$  Using Proposition~\ref{prop: additivity} and Proposition~\ref{prop: add}, we can rewrite the inequality as $$2tb_{\mathbb{Q}}(L)+1+2rot_{\mathbb{Q}}(L, F)\leq 4\tau^{\ast}_{c(\xi)}(Y, K, F)+1,$$  which is the same as (\ref{plus0}).

Finally, Definition \ref{def:tau*} implies that $\tau^{\ast}_{c(\xi)}(Y, K)=k_{\mathfrak{s}_{\xi}, F}+k'$ for some integer $k'$.  So (\ref{plus-1}) follows from Lemma~\ref{lemma:parity}.
\end{proof}

\section{Rational $\tau$ invariant of Floer simple knots}

Throughout this section, we will assume that the 3-manifold $Y$ is a rational homology sphere.  Thus a knot $K$ in $Y$ is automatically rationally null-homologous.  Since the Alexander grading defined by Equation (\ref{AlexGrading}) is independent of the choice of the rational Seifert surface $F$, we can conveniently suppress the subscript and write $A(\underline{\mathfrak{s}})$ for the Alexander grading.

The Alexander grading determines the genus of a knot \cite{OSzKnot} \cite{n}.  More precisely,
let $$\mathcal B_{Y,K}=\left\{\underline{\mathfrak{s}} \in \underline{\text{Spin}^{c}}(Y,K)   \left|\:\widehat{HFK}(Y,K,\underline{\mathfrak{s}})\ne0\right.\right\}.$$
If we denote $$A_{\max}=\max\{A(\underline{\mathfrak{s}})|\:\underline{\mathfrak{s}}\in \mathcal B_{Y,K}\}, \quad A_{\min}=\min\{A(\underline{\mathfrak{s}})|\: \underline{\mathfrak{s}}     \in\mathcal B_{Y,K}\},$$
then
\begin{equation}\label{AlexGenus}
A_{\max}=-A_{\min}=-\frac{\chi(F)}{2q}+\frac{1}{2},
\end{equation}
 where $F$ is a minimal genus rational Seifert surface for $K$.

Every Spin$^{c}$ structure $\mathfrak{s}$  has a conjugate Spin$^{c}$ structure $J \mathfrak{s}$ via the conjugation map $J: \text{Spin}^{c}(Y) \rightarrow \text{Spin}^{c}(Y)$.  Likewise, there is a conjugation map
$\widetilde J :      \underline{\text{Spin}^{c}}(Y,K)    \rightarrow \underline{\text{Spin}^{c}}(Y,K)$ on the set of all relative Spin$^c$ structures.   These two conjugation maps satisfy the relation
\begin{equation}\label{conjugaterelation}
G_{Y,K}(\widetilde J \underline{\mathfrak{s}})=J G_{Y,K} ( \underline{\mathfrak{s}})+ \text{PD}[K]
\end{equation}
for all $\underline{\mathfrak{s}} \in  \underline{\text{Spin}^{c}}(Y,K)$.  The
conjugation $\widetilde J$ maps $\mathcal B_{Y,K}$ into  $\mathcal B_{Y,K}$, and there is an isomorphism of absolutely graded chain complexes:
\begin{equation}\label{symmetry}
\widehat{CFK}_*(Y,K,\underline{\mathfrak{s}}   )\cong\widehat{CFK}_{*-d}(Y,K,\widetilde J\underline{\mathfrak{s}}   ),
\end{equation}
where $d=A(\underline{\mathfrak{s}})- A(\widetilde J\underline{\mathfrak{s}} )$.  Note that the Alexander grading is anti-symmetric with respect to $\widetilde J$: $$A(\underline{\mathfrak{s}})=-A(\widetilde J\underline{\mathfrak{s}}).$$ Hence, we can also write $d=2A(\underline{\mathfrak{s}})$ for the shifting of absolute grading.

\medskip
Now, assume that $K$ is a knot in an L-space $Y$.  In this special case, $\mathrm{rank} \widehat {HF}(Y, \mathfrak{s})=1$ for each Spin$^{c}$ structure $\mathfrak{s}$, so there is essentially a unique $\tau$ invariant that can be defined using the Alexander filtration described earlier.  More precisely, Let
$$\tau_\mathfrak{s}(Y,K)=\text{min}\{k_{\mathfrak{s}, F}+k|\HF(Y,\mathfrak{s}) \subset \text{Im}(I_{k})\}.$$
It is straightforward to see that $\tau_\mathfrak{s}(Y,K)$ coincides with the invariant $\tau^{\ast}_{c(\xi)}(Y, K, F)$ for nontrivial contact invariant $c_{\xi}$ by comparing its Definition \ref{def:tau*}.  \footnote{Indeed, one can also compare with other variations of $\tau$ invariant defined by Ni-Vafaee \cite{nv} and Raoux \cite{r} and find that they are all equal.}

Now, in addition, assume that $K$ is a Floer simple knot. Then there is exactly one relative Spin$^c$ structure $\underline{\mathfrak{s}}$ with underlying Spin$^c$ structure $\mathfrak{s}$ such that  $$\widehat{HFK}(Y,K,\underline{\mathfrak{s}} ) \cong \widehat {HF}(Y, \mathfrak{s})\cong \mathbb{Z}.$$  Therefore,
 \begin{equation}\label{tau=A}
\tau_{\mathfrak{s}}(Y, K)=A(\underline{\mathfrak{s}} ).
\end{equation}

Finally,  since (\ref{symmetry})  implies that $$\widehat{HFK}_m(Y,K,\underline{\mathfrak{s}}   )\cong\widehat{HFK}_{m-2A(\underline{\mathfrak{s}})}(Y,K,\widetilde J\underline{\mathfrak{s}} ) \cong \mathbb{Z} $$ for Floer simple knots, we see that the gradings of the generators must be the same as the corresponding correction terms of the underlying Spin$^c$ structures (see, e.g., \cite{NiWu}), i.e., $d(Y, G_{Y,K}(\underline{\mathfrak{s}}))=m$, $d(Y, G_{Y,K}(\widetilde J\underline{\mathfrak{s}}))=m-2A(\underline{\mathfrak{s}})$.  Hence, (\ref{conjugaterelation}) implies $$2A(\underline{\mathfrak{s}})=d(Y, \mathfrak{s})-d(Y, J\mathfrak{s}+\text{PD}[K]).$$
 See Figure \ref{fig:symm} below for a graphical illustration.

\begin{figure}[htb!]
\vspace{5pt}
\centering
\begin{tikzpicture}[scale=0.8]

    \draw[step=1, black!20!white, very thin] (-4.9, -4.9) grid (4.9, 4.9);
	
	\begin{scope}[thin, black!60!white]
		\draw [->] (-5, 0) -- (5, 0);
		\draw [->] (0, -5) -- (0, 5);
	\end{scope}

	\filldraw (0, 4) circle (2pt) node[] (a){};
	\filldraw (0, 2) circle (2pt) node[] (b){};
	\filldraw (0, -2) circle (2pt) node[] (c){};
	\filldraw (0, -4) circle (2pt) node[] (d){};
    \draw[->] (a) to [bend right=45] (d);
    \draw[->] (b) to [bend right=45] (c);
	\node [right] at (a) {$\HFK_M(Y,K,A_{\max})\cong \Z$};
	\node [right] at (b) {$\HFK_m(Y,K,A(\underline{\mathfrak{s}}))\cong \Z \cong \HF(Y, \mathfrak{s})$};
	\node [right] at (c) {$\HFK_{m-2A(\underline{\mathfrak{s}})}(Y,K,A(\widetilde J\underline{\mathfrak{s}}))\cong \Z\cong \HF(Y, J\mathfrak{s}+\mathrm{PD}[K])$};
	\node [right] at (d) {$\HFK_{M-2A_{\max}}(Y,K,A_{\min}=-A_{\max})\cong \Z$};
    \node [right] at (0,5) {$A$};
    \node at (-1.5, 0.5) {$\cong$};
    \node at (-0.5, 0.5) {$\cong$};
\end{tikzpicture}
\caption{$\HFK(Y,K)$ of a Floer simple knot in an L-space has isomorphisms $\HFK_{m}(Y,K,A(\underline{\mathfrak{s}}))\cong \HFK_{m-2A(\underline{\mathfrak{s}})}(Y,K,A(\widetilde J\underline{\mathfrak{s}}))$.  The correction terms $d(Y, \mathfrak{s})=m$, $d(Y, J\mathfrak{s}+\text{PD}[K])=m-2A(\underline{\mathfrak{s}})$.  The $\tau$ invariant  $\tau_{\mathfrak{s}}(Y, K)=A(\underline{\mathfrak{s}} ).$  }
\label{fig:symm}
\end{figure}
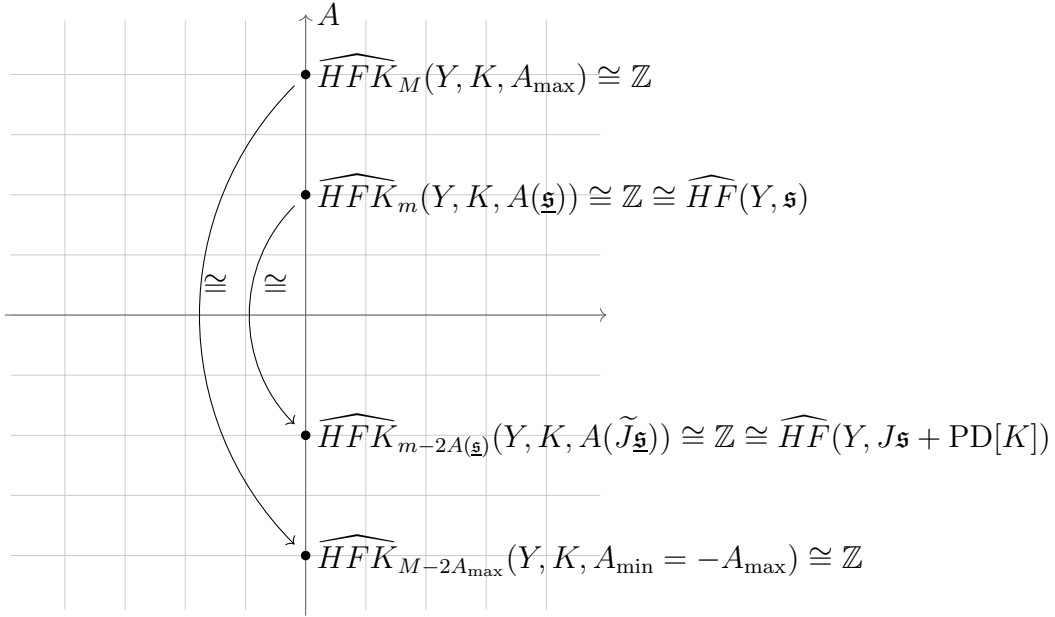

\medskip
Putting together the above discussion, we conclude that the $\tau$ invariants of a Floer simple knot $K$ in an L-space $Y$ can be determined from the correction terms of $Y$,
$$2\tau_\mathfrak{s}(Y,K) = d(Y, \mathfrak{s})-d(Y, J\mathfrak{s}+\text{PD}[K]).$$
This proves Proposition~\ref{prop:tau}.

\section{An example - simple knots in lens spaces}
As a special example, consider simple knots in lens spaces.  Remember that a lens space $L(m,n)$ is an L-space.  The notion of simple knots in lens space is describe as follows.  In Figure \ref{fig:simpleknot}, we draw the standard Heegaard diagram of a lens space $L(m,n)$.  Here the opposite side of the rectangle is identified to give a torus, and there are one $\alpha$ and one $\beta$ curve on the torus, intersecting at $m$ points and dividing the torus into $m$ regions.  We then put two base points $z$, $w$ and connect them in a proper way on the torus.  Such a simple closed curve colored in green is called a \emph{simple} knot \cite{bgh}.  There is an alternative way of describing simple knots without referring to the Heegaard diagram: Take a genus 1 Heegaard splitting $U_0 \cup U_1$ of the lens space $L(m,n)$. Let $D_0$, $D_1$ be meridian disks in $U_0$, $U_1$ such that $\partial D_0\cap \partial D_1$ consists of exactly $m$ points.  A simple knot in $L(m,n)$ is either the unknot or the union of two arcs $a_0\subset D_0$ and $a_1\subset D_1$.

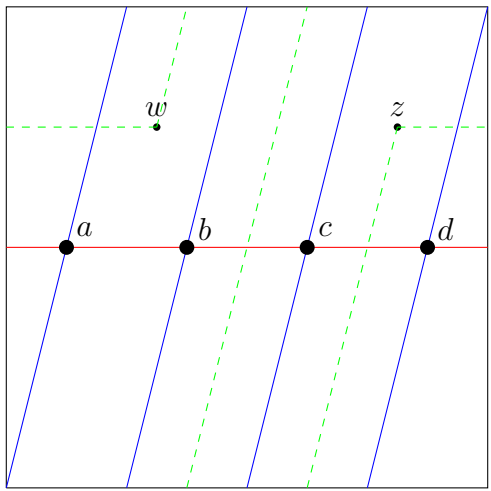
\begin{figure}[htb!]
\vspace{5pt}
\centering
\begin{tikzpicture}[scale=0.8]

\draw (-4,-4) --(-4,4) --(4,4) --(4,-4) --(-4, -4);
\draw[red] (-4,0) --(4, 0);
\draw[blue] (-4, -4) -- (-2,4);
\draw[blue] (-2, -4) -- (0, 4);
\draw[blue] (0, -4) --(2, 4);
\draw[blue] (2, -4) --( 4,4);
\draw (-3, 0) node[circle,fill,inner sep=2pt]{}; \draw (-2.7, 0.3) node{$a$};
\draw (-1, 0) node[circle,fill,inner sep=2pt]{}; \draw (-0.7, 0.3) node{$b$};
\draw (1, 0)  node[circle,fill,inner sep=2pt]{}; \draw (1.3, 0.3) node{$c$};
\draw (3, 0) node[circle,fill,inner sep=2pt]{};  \draw (3.3, 0.3) node{$d$};
\draw (-1.5, 2) node[above]{$w$} node[circle,fill,inner sep=1pt]{};
\draw (2.5, 2)  node[above]{$z$}  node[circle,fill,inner sep=1pt]{};
\draw[green, dashed] (-1.5 , 2) -- (-1, 4);
\draw[green, dashed] (-1, -4) -- (1,4);
\draw[green, dashed] (1, -4) --(2.5,2);
\draw[green, dashed] (-1.5, 2) -- (-4, 2);
\draw[green, dashed]  (2.5, 2) -- (4, 2);

\end{tikzpicture}
\caption{This is the standard Heegaard diagram of the lens space $L(4,1)$.  The red $\alpha$ curve and the blue $\beta$ curve intersect at four points $a$, $b$, $c$ and $d$. The dotted green curve is a simple knot of order 2.  }
\label{fig:simpleknot}
\end{figure}

Simple knots are Floer simple.  This follows from the observation that the knot Floer complex $\CFK(L(m,n),K)$ is generated by exactly the $m$ intersection points of $\alpha$ and $\beta$ curves.  Moreover, there is exactly one simple knot in each homology class in $H_1(L(m,n);\mathbb{Z})$ - this corresponds to the different relative positions of $z$ and $w$. Figure \ref{fig:simpleknot} exhibits a Heegaard diagram of the order 2 simple knot $K$ in the lens space $L(4,1)$.  As computed by Raoux \cite{r}, the Alexander grading of each generator is illustrated in the second row of Table \ref{table1}, which is also equal to the $\tau$ invariant of the corresponding Spin$^c$ structure. We also computed the correction terms of $L(4,1)$ using formulae in \cite[Proposition 4.8]{OSzAbGr}, and verified  $$2\tau_\mathfrak{s}(Y,K) = d(Y, \mathfrak{s})-d(Y, J\mathfrak{s}+\text{PD}[K]).$$

\begin{table}[ht!]
  \centering
  \begin{tabular}{|c | c| c| c| c |}
   \hline
   $x$ & $a$ & $b$ & $c$ & $d$  \\
   \hline
   $A(x)$& $0$ & $1/2$ & $0$ & $-1/2$ \\
   \hline
   $\tau_{\mathfrak{s}(x)}$ & $0$& $1/2$ & $0$ & $-1/2$  \\
   \hline
   $d(Y, \mathfrak{s}(x))$ & $0$ & $3/4$ & $0$ & $-1/4$\\
   \hline
   $d(Y, J\mathfrak{s}(x)+\text{PD}[K]) $ & $0$ & $-1/4$ & $0$ & $3/4$\\
   \hline
  \end{tabular}
\medskip
  \caption{For the order two simple knot $K$ in the lens space $Y=L(4,1)$, we verified that $2\tau_\mathfrak{s}(Y,K) = d(Y, \mathfrak{s})-d(Y, J\mathfrak{s}+\text{PD}[K]).$  }

\label{table1}
 \end{table}

In general, according to \cite{ho}, there are exactly $m-1$ tight contact structures on a lens space $L(m,1)$, which can be represented by Legendrian surgeries on Legendrian unknots in $(S^{3}, \xi_{std})$ with Thurston-Bennequin invariant $-m+1$, and rotation number $m-2, m-4,\cdots, 2-m$.  They bound Stein domains $(W, J_1), (W, J_2), \cdots, (W, J_{m-1})$, respectively. Since $\langle c_{1}(J_{i}), [F\cup C]\rangle=m-2i$, for $i=1,2,\cdots, m-1$, $J_1, J_2, \cdots, J_{m-1}$ represent distinct Stein structures.  By \cite[Theorem 2]{p1}, the contact invariants of these $m-1$ tight contact structures are all distinct and nontrivial. Since $L(m,1)$ is an L-space, these $m-1$ tight contact structures represent $m-1$ distinct Spin$^{c}$ structures on $L(m,1)$.

Let us turn back to the example of the order two simple knot $K$ in $L(4,1)$ depicted in Figure~\ref{fig:simpleknot}. Suppose $\xi_{1}$, $\xi_{2}$,  and $\xi_{3}$ are  the three tight contact structures on  $L(4,1)$ obtained from Legendrian surgeries on Legendrian unknots in $(S^{3}, \xi_{std})$ with Thurston-Bennequin invariant $-3$, and rotation number $2$, $0$ and $-2$, respectively. 
According to \cite{dgs}, we can compute the Hopf invariant $h(\xi_{i})$ of $\xi_{i}$, defined as $c_{1}^{2}(W,J)-2\chi(W)-3\sigma(W)$ for any Stein filling $(W,J)$ of $\xi_{i}$, and obtain that $h(\xi_{1})=h(\xi_{3})=-2$, and $h(\xi_{2})=-1$.  Recall from \cite{OSzContact} or \cite{p1} that the correction term $d(Y,\mathfrak{s}_{\xi})$ of a contact structure $\xi$ equals  $-h(\xi)/4-\frac{1}{2}$.  It follows that $d(L(4,1),\mathfrak{s}_{\xi_1})=d(L(4,1),\mathfrak{s}_{\xi_3})=0$, and $d(L(4,1),\mathfrak{s}_{\xi_2})=-\frac{1}{4}$.  Thus, we can use Table~\ref{table1} to compute the rational $\tau$-invariant of the simple knot $K$, and see that $\tau_{\xi_1}(L(4,1), K)=\tau_{\xi_3}(L(4,1), K)=0$, and $\tau_{\xi_2}(L(4,1), K)=-\frac{1}{2}$.

\bigskip

Now, suppose $\xi$ is one of the $m-1$ tight contact structures of $L(m,1)$.  Given the simple knot $K$ of order $q$ in $L(m,1)$, we compare the rational Thurston-Bennequin bound of Baker-Etnyre (\ref{BEineq}) and our bound (\ref{plus-1}) from Theorem \ref{thm: main}.

We have seen from (\ref{AlexGenus}) that the genus of a rationally null-homologous knot is determined by the Alexander grading
$$A_{\max}=-\frac{\chi(F)}{2q}+\frac{1}{2}, $$  where $F$ is a minimal genus rational Seifert surface for $K$.  So (\ref{BEineq}) implies that $$tb_{\mathbb{Q}}(L)+rot_{\mathbb{Q}}(L; F)\leq -\frac{1}{q}\chi(F)= 2A_{\max}-1.$$
Note that this bound is independent of the prescribed contact structures on the lens space.

On the other hand, it follows from (\ref{tau=A}) that $\tau_{\mathfrak{s}}=A_{\underline{\mathfrak{s}}}$ for Floer simple knots.  Thus (\ref{plus-1}) implies that
\begin{align*}
tb_{\mathbb{Q}}(L)+rot_{\mathbb{Q}}(L; F) & \leq 2\tau^{\ast}_{c(\xi)}(Y, K)-1\\
& =2A_{\underline{\xi}}-1\\
&\leq  2A_{\max}-1,
\end{align*}
where $\underline{\xi}$ is the relative Spin$^c$ structure with the underlying Spin$^c$ structure induced from the contact structure $\xi$.  (Indeed, $\tau^{\ast}_{c(\xi)}(Y, K)\leq 2A_{\max}$ is true for an arbitrary knot $K$ in a rational homology sphere $Y$. So provided that the contact invariant $c(\xi)$ is nontrivial, (\ref{plus-1}) gives a stronger bound than (\ref{BEineq}) in general.)

Finally, we remark that Cornwell obtained a Bennequin bound for lens spaces equipped with universally tight contact structures in terms of different knot invariants \cite{c}.  In contrast, our bound (\ref{plus-1}) is applicable to both universally tight and virtually overtwisted contact structures on lens spaces.  


\end{document}